\documentclass[a4paper,11pt,twoside]{amsart}
\usepackage[english]{babel}
\usepackage[utf8]{inputenc}

\usepackage[a4paper,inner=3cm,outer=3cm,top=4cm,bottom=4cm,pdftex]{geometry}
\usepackage{fancyhdr}
\pagestyle{fancy}

\fancyhf{}
\fancyhead[LE]{\thepage}
\fancyhead[RO]{\thepage}
\fancyhead[LO,RE]{}
\setlength{\headheight}{13.6pt}
\usepackage{color}
\usepackage{bold-extra}
\usepackage{ mathrsfs }

\usepackage{comment}
\usepackage{graphics}
\usepackage{aliascnt}
\usepackage[pdftex,citecolor=green,linkcolor=red]{hyperref}

\usepackage{amsmath}
\usepackage{amsfonts}
\usepackage{amssymb}
\usepackage{amsthm}
\usepackage{comment}
\usepackage{mathtools}
\usepackage{enumerate}
\usepackage{enumitem} 

\newtheorem{theorem}{Theorem}[section]
\newtheorem{corollary}[theorem]{Corollary}

\newtheorem{lemma}[theorem]{Lemma}
\newtheorem{proposition}[theorem]{Proposition}
\theoremstyle{definition}

\newtheorem{remark}[theorem]{Remark}
\newtheorem{conjecture}[theorem]{Conjecture}
\numberwithin{equation}{section}

\newcommand\eps{\varepsilon}

\renewcommand\P{\mathbf{P}}
\newcommand\p{\mathbf{p}}

\newcommand\E{\mathbf{E}}

\newcommand\R{\mathbb{R}}
\newcommand\Z{\mathbb{Z}}
\newcommand\N{\mathbb{N}}
\newcommand\C{\mathbb{C}}

\newcommand\supp{\mathrm{supp}}

\setlength\evensidemargin\oddsidemargin
\begin{document}

\title{Singmaster's conjecture in the interior of Pascal's triangle}

\author[Matom\"aki]{Kaisa Matom\"aki}
\address{Department of Mathematics and Statistics \\
University of Turku, 20014 Turku\\
Finland}
\email{ksmato@utu.fi}

\author[Radziwi{\l}{\l}]{Maksym Radziwi{\l}{\l}}
\address{ Department of Mathematics,
  Caltech, 
  1200 E California Blvd,
  Pasadena, CA, 91125 \\
	USA}
\email{maksym.radziwill@gmail.com}

\author[Shao]{Xuancheng Shao}
\address{Department of Mathematics, University of Kentucky\\
715 Patterson Office Tower\\
Lexington, KY 40506\\
USA}
\email{xuancheng.shao@uky.edu}

\author[Tao]{Terence Tao}
\address{Department of Mathematics, UCLA\\
405 Hilgard Ave\\
Los Angeles CA 90095\\
USA}
\email{tao@math.ucla.edu}

\author[Ter\"av\"ainen]{Joni Ter\"av\"ainen}
\address{Mathematical Institute, University of Oxford \\
Woodstock Road \\
Oxford OX2 6GG \\
United Kingdom}
\email{joni.teravainen@maths.ox.ac.uk}

\begin{abstract}  Singmaster's conjecture asserts that every natural number greater than one occurs at most a bounded number of times in Pascal's triangle; that is, for any natural number $t \geq 2$, the number of solutions to the equation $\binom{n}{m} = t$ for natural numbers $1 \leq m <n$ is bounded.  In this paper we establish this result in the interior region $\exp(\log^{2/3+\eps} n) \leq m \leq n - \exp(\log^{2/3+\eps} n)$ for any fixed $\eps>0$.  Indeed, when $t$ is sufficiently large depending on $\eps$, we show that there are at most four solutions (or at most two in either half of Pascal's triangle) in this region.  We also establish analogous results for the equation $(n)_m = t$, where $(n)_m \coloneqq n(n-1) \dots (n-m+1)$ denotes the falling factorial.
\end{abstract}

\maketitle

\section{Introduction}

In 1971, Singmaster \cite{singmaster} conjectured that any natural number greater than one only appeared in Pascal's triangle a bounded number of times.  In asymptotic notation\footnote{Our conventions for asymptotic notation are set out in Section \ref{notation-sec}.}, we can express this conjecture as

\begin{conjecture}[Singmaster's conjecture]\label{singmaster}  For any natural number $t \geq 2$, the number of integer solutions $1 \leq m < n$ to the equation 
\begin{equation}\label{bnt}
\binom{n}{m} = t
\end{equation}
 is $O(1)$.
\end{conjecture}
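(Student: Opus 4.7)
The plan is to split the solution set $\{(n,m): 1 \leq m < n,\ \binom{n}{m}=t\}$ according to the size of $m' \coloneqq \min(m,n-m)$, using the symmetry $\binom{n}{m} = \binom{n}{n-m}$ to pass to $m \leq n/2$ at the cost of doubling the count. The case $m=1$ yields only $n=t$; and for every $m \geq 2$ the map $n \mapsto \binom{n}{m}$ is strictly increasing on $n \geq m$, so at most one $n$ corresponds to each such $m$. The task therefore reduces to bounding the number of $m \in [2, n/2]$ for which $\binom{n}{m}=t$ has an integer solution $n$.

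For the large-$m$ end I would invoke the interior theorem of this paper. For a parameter $M$ to be chosen, any solution with $m \geq M$ satisfies $t = \binom{n}{m} \geq \binom{n}{M} \geq (n/M)^M$, so $n = O(Mt^{1/M})$; choosing $M$ of order $\log t$ makes $t^{1/M} = O(1)$ and hence $n = O(\log t)$, so $\exp(\log^{2/3+\eps}n) \ll M$. The interior theorem then yields $O(1)$ solutions (in fact at most two per half-triangle) with $m \geq M$.

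What remain are the contributions from $2 \leq m < M$, with $M$ of order $\log t$. Trivial monotonicity in $n$ gives only $O(\log t)$ solutions here, not the $O(1)$ the conjecture demands. What one really needs is a Diophantine statement, uniform in $m \geq 2$, to the effect that only an absolute constant number of $m$ admit an integer $n$ with $\binom{n}{m}=t$. For each individually fixed small $m$ the bound $O(1)$ is automatic (the equation $\binom{n}{m}=t$ is a polynomial in $n$ of degree $m$, with at most $m$ integer roots), but ruling out accidental coincidences $\binom{n_1}{m_1}=\binom{n_2}{m_2}=t$ for \emph{distinct} small $m_i$ is precisely the hard content of Singmaster's conjecture.

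The main obstacle, and the reason the full conjecture remains open, is this intermediate range $2 \leq m \leq O(\log t)$. Pushing the interior theorem down to bounded $m$ appears to require zero-free regions for $\zeta(s)$ of essentially Riemann-hypothesis strength, since the threshold $\exp(\log^{2/3+\eps}n)$ is tied to the Vinogradov--Korobov region. A natural modest first step would be to replace $2/3+\eps$ by $1/2+\eps$ via Vinogradov-type moment bounds on zeta, but obtaining a truly $O(1)$ count uniform in $t$ will require a qualitatively new arithmetic-geometric input capable of ruling out accidental repetitions among small-$m$ binomial coefficients, something beyond the analytic machinery used in this paper.
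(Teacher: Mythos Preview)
The statement you are attempting to prove is labeled \emph{Conjecture}~\ref{singmaster} in the paper, and the paper does not prove it; indeed the authors explicitly note (Remark~1.7) that after Theorem~\ref{main} one may restrict to the edge region $2 \leq m \leq \exp(\log^{2/3+\eps} n)$, and that handling this region remains open.  So there is no ``paper's own proof'' to compare against: the conjecture is genuinely unresolved.

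Your proposal is not a proof either, and you correctly say so.  Your diagnosis of the obstruction is accurate and matches the paper's discussion: the interior theorem (Theorem~\ref{main}) disposes of all solutions with $m$ large enough that $m \geq \exp(\log^{2/3+\eps} n)$, which by \eqref{n-form-alt} amounts to $m \gg \log t / \log_2^{1/(2/3+\eps)} t$, leaving the range $2 \leq m \ll \log t / \log_2^{3/2-\eps} t$ untouched.  Your remark that the $2/3$ exponent is tied to Vinogradov--Korobov is also consistent with the paper's discussion in \S\ref{ss:proofmethods}, where the restriction $N,M = O(\exp(\log^{3/2-\eps} P))$ in Proposition~\ref{equid} is identified as the source of the threshold and is noted to persist even under RH.

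One minor correction to your large-$m$ reduction: the inequality $\binom{n}{M} \geq (n/M)^M$ is not quite right (the correct lower bound is $\binom{n}{M} \geq ((n-M+1)/M)^M$, or simply $\binom{n}{m} \geq (n/m)^m$ applied at $m$ itself), but this does not affect the conclusion $n = O(\log t)$ when $m \asymp \log t$, which also follows directly from \eqref{n-form-alt}.  In any case, the substantive gap is exactly the one you name: bounding collisions among $\binom{n}{m}$ for bounded or slowly growing $m$ requires arithmetic input (of the flavor of the results cited in Remarks~\ref{fib} and~1.7) that is not supplied by the analytic methods of this paper.
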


Note that we can exclude the edges $m=0,m=n$ of Pascal's triangle from consideration since $\binom{n}{m}=1$ in these cases.  Currently the largest known number of solutions to \eqref{bnt} for a given $t$ is eight, arising from $t=3003$ and 
\begin{equation}\label{nam}
(n,m) = (3003, 1), (78, 2), (15, 5), (14, 6), (14,8), (15, 10), (78, 76), (3003, 3002).
\end{equation}

For the purposes of attacking this conjecture, we may of course assume $t$ to be larger than any given absolute constant, which we shall implicitly do in the sequel.  In particular we can assume that the iterated logarithms
$$ \log_2 t \coloneqq \log\log t; \quad \log_3 t \coloneqq \log\log\log t$$
are well-defined and positive.

In view of the symmetry 
\begin{equation}\label{symmetry}
\binom{n}{m} = \binom{n}{n-m}
\end{equation}
we may restrict attention to the left half 
\begin{equation}\label{left-half}
\{ (m,n) \in \N \times \N: 1 \leq m \leq n/2 \}
\end{equation}
of Pascal's triangle.  For solutions to \eqref{bnt} in this half \eqref{left-half} of the triangle, we have
$$ t = \binom{n}{m} \geq \binom{2m}{m} \asymp 4^m / \sqrt{m}$$
by Stirling's approximation \eqref{stirling}, and thus we have the upper bound
\begin{equation}\label{m-bound}
 m \leq \frac{1}{\log 4} \log t + O( \log_2 t ).
\end{equation}
Since $n \mapsto \binom{n}{m}$ is an increasing function of $n$ for fixed $m \geq 1$, $n$ is uniquely determined by $m$ and $t$.  Thus by~\eqref{m-bound} we have at most $O(\log t)$ solutions to the equation $\binom{n}{m} = t$, a fact already observed in the original paper \cite{singmaster} of Singmaster.  This bound was improved to $O(\log t / \log_2 t)$ by Abbott, Erd\H{o}s, and Hansen \cite{aeh}, to $O( \log t \log_3 t / \log_2^2 t )$ by Kane \cite{kane-1}, and finally to $O( \log t \log_3 t / \log_2^3 t)$ in a followup work of Kane \cite{kane-2}. This remains the best known unconditional bound for the total number of solutions, although it was observed in \cite{aeh} that the improved bound $O_\eps(\log^{2/3+\eps} t)$ was available for any $\eps>0$ assuming the conjecture of Cram\'er \cite{cramer}.  

From the elementary inequalities
$$ \frac{(n-m)^m}{m!} < \binom{n}{m} \leq \frac{n^m}{m!}$$
and some rearranging we see that any solution to $\binom{n}{m} = t$ obeys the bounds
$$ (t m!)^{1/m} \leq n < (tm!)^{1/m} + m.$$
Applying Stirling's approximation \eqref{stirling} (and also $n \geq m$) we can thus obtain the order of magnitude of $n$ as a function of $m$ and $t$:
\begin{equation}\label{n-form}
n \asymp m t^{1/m}
\end{equation}
or equivalently
\begin{equation}\label{n-form-alt}
\frac{n}{m} \asymp \exp\left( \frac{\log t}{m} \right ).
\end{equation}
In particular we see that $n$ grows extremely rapidly when the ratio $m/\log t$ becomes small.  This makes the difficulty of the problem increase as $m / \log t$ approaches zero, and indeed treating the case of small values of $m/\log t$ is the main obstruction to making further progress on bounding the total number of solutions.

\begin{remark}  In the left half \eqref{left-half} of Pascal's triangle, a finer application of Stirling's approximation in \cite[(3.1)]{kane-1} gave the more precise estimate
$$ n = (tm!)^{1/m} + \frac{m-1}{2} + O( m t^{-1/m} ).$$
We will not explicitly use this estimate here.
\end{remark}

In this paper we study the opposite regime in which $m/\log t$ is relatively large, or equivalently (by \eqref{n-form-alt}) $n$ and $m$ are somewhat comparable (in the doubly logarithmic sense $\log_2 n \asymp \log_2 m$).  More precisely, we have the following result:

\begin{theorem}[Singmaster's conjecture in the interior of Pascal's triangle]\label{main}  Let $0 < \eps < 1$, and assume that $t$ is sufficiently large depending on $\eps$.  Then there are at most two solutions to \eqref{bnt} in the region $\exp( \log^{2/3 + \eps} n ) \leq m \leq n/2$.  By \eqref{symmetry}, we thus have at most four solutions to \eqref{bnt} in the region $\exp( \log^{2/3 + \eps} n ) \leq m \leq n - \exp( \log^{2/3 + \eps} n )$.   Furthermore, in the smaller region $\exp( \log^{2/3 + \eps} n ) \leq m \leq n/\exp(\log^{1-\eps'} n)$ there is at most one solution, whenever $0 < \eps' < \frac{\eps}{2/3+\eps}$ and $t$ is sufficiently large depending on both $\eps$ and $\eps'$.
\end{theorem}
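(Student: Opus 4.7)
The plan is to argue by contradiction. For the first claim, suppose there are three distinct solutions $(n_i, m_i)$ ($i=1,2,3$) to \eqref{bnt} in the interior region; since $n \mapsto \binom{n}{m}$ is strictly increasing in $n$, distinct solutions have distinct $m_i$, so order them so that $m_1 < m_2 < m_3$ and consequently $n_1 > n_2 > n_3$, with the quantitative relation $n_i \asymp m_i t^{1/m_i}$ from \eqref{n-form}. For the second claim I would analogously assume two solutions in the smaller region $m \leq n/\exp(\log^{1-\eps'} n)$ and aim for a contradiction.

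The heart of the argument is a prime factorization analysis of $t$. Writing $\binom{n_i}{m_i} = (n_i)_{m_i}/m_i!$, any prime $p > m_3$ dividing $t$ cannot divide $m_i!$, so must divide the tail product $(n_i)_{m_i} = n_i(n_i-1)\cdots(n_i-m_i+1)$ and thus lies in the interval $I_i := (n_i - m_i, n_i]$; conversely, every prime $p \in I_i$ with $p > m_3$ divides $t$. After accounting for the rare prime powers $p^k$ with $p \leq \sqrt{n_i}$ via Legendre's formula, we conclude that the multiset of primes in $I_i \cap (m_3,\infty)$ agrees across $i = 1,2,3$. Thus the three intervals $I_1, I_2, I_3$ of differing lengths $m_1 < m_2 < m_3$ must contain exactly the same large primes with the same multiplicities.

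The next step is to feed this rigidity into a theorem on primes in almost all short intervals of Matom\"aki--Radziwi\l\l--Ter\"av\"ainen type, which provides an asymptotic of the shape $\pi(x) - \pi(x-h) \sim h/\log x$ at the scale $h = \exp(\log^{2/3+\eps} x)$, valid for all $x$ outside an exceptional set of density zero. Since $m_i \geq \exp(\log^{2/3+\eps} n_i)$, the intervals $I_i$ lie at exactly the right scale. This asymptotic pins down the count of primes in each $I_i$ and, crucially, in each symmetric difference $I_i \triangle I_j$; the matching constraint from the prime factorization step forces the count of large primes in every such symmetric difference to vanish, whereas the short-interval theorem predicts a positive count of order $|m_i - m_j|/\log n$, yielding a contradiction. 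For the second claim, the condition $m \leq n/\exp(\log^{1-\eps'} n)$ forces $\log(n_1/n_2)$ to be large enough that the intervals $I_1$ and $I_2$ become sufficiently separated (in position rather than merely in length) for the same prime-counting argument to yield a contradiction from only two solutions; the specific relation $\eps' < \eps/(2/3+\eps)$ should emerge by balancing the gain in positional separation against the loss in the applicable short-interval scale.

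The principal obstacle is controlling the exceptional set in the short-interval prime theorem: the conclusion holds only for almost all $x$, whereas each $n_i$ is a specific value attached to $t$. I would handle this by exploiting that for each $t$ there are only $O(\log t)$ solutions in total (the classical Singmaster bound), so if a positive density of $t$'s produced bad configurations, the induced set of exceptional $n_i$'s would, via a Fubini/pigeonhole argument, violate the density-zero conclusion of the short-interval theorem, giving the desired global contradiction. Secondary technical issues include handling the contribution of small prime powers $p^k \mid t$ with $p \leq m_3$ (a Legendre-formula computation that should behave well in this regime), and aligning the Stirling approximations for $n_i$ with the precision of the prime-counting input; these are expected to be routine but require care.
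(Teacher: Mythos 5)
Your factorization step is incorrect as stated, and this is not a minor repair: a prime $p>m_3$ dividing $t=\binom{n_i}{m_i}$ need not lie in $I_i=(n_i-m_i,n_i]$; it only needs to have a \emph{multiple} in that interval (for $p>\sqrt{n_i}$ one has $v_p(t)=1$ exactly when $I_i$ contains some multiple of $p$, and for smaller $p$ higher powers enter via Legendre's formula). So the three intervals are not forced to contain the same primes, and the true rigidity is a statement about the fractional parts $\{n_i/p\}$, $\{m_i/p\}$ as in \eqref{p-eq}. More seriously, the analytic input you invoke does not exist: there is no unconditional theorem (nor one usable here even under RH) giving an asymptotic count of primes in intervals of length $\exp(\log^{2/3+\eps}x)$ near $x$; almost-all results require length a power of $x$, and your proposed fix for the exceptional set fails because Theorem \ref{main} is a claim about \emph{every} sufficiently large $t$ — a density-zero set of exceptional $x$ can still contain the $n_i$ attached to infinitely many specific $t$, and the classical $O(\log t)$ bound on the number of solutions gives no Fubini/pigeonhole route from an almost-all statement to a given $t$. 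The paper sidesteps all of this by taking the primes at a \emph{small} scale $P\asymp\exp(\log^{2/3+\eps/2}(n+n'))$, where equidistribution of $\{N/\p\},\{M/\p^j\}$ for $N,M\le\exp(\log^{3/2-\eps}P)$ is provable unconditionally via Vaughan's identity and Vinogradov's exponential sum estimate (Proposition \ref{equid}), and then extracting a contradiction from covariances of fractional parts; the exponent $2/3$ in the theorem is dual to that $3/2$ range, not to any short-interval prime count at height $n$.

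Even if the rigidity step were repaired, your intended contradiction — a positive prime count of order $|m_i-m_j|/\log n$ in a symmetric difference — collapses when two solutions are close, e.g.\ $|m_i-m_j|=O(1)$, and such near-collisions genuinely occur (the Fibonacci family of Remark \ref{fib} has adjacent values of $m$), so no prime-counting argument alone can give the sharp bound of two. In the paper, the non-Archimedean part only yields the distance estimate (Proposition \ref{distance}), localizing all solutions to a window of size $\exp(O(\log_2^{1-\eps'}t))$; the bound of at most two solutions inside that window requires the separate Kane-type convexity argument (Proposition \ref{prop:kane}, resting on Proposition \ref{derivi} and Lemma \ref{integ}), and the one-solution claim in the narrower region $m\le n/\exp(\log^{1-\eps'}n)$ likewise comes from that Archimedean step. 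Your proposal contains no analogue of this second ingredient, so it cannot reach the stated conclusions even granting its analytic inputs.
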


\begin{remark}\label{fib} The bound of two (or four) solutions is absolutely sharp, in view of the infinite family of solutions observed in \cite{lind}, \cite{singmaster-2}, \cite{tovey} to the equation
$$ \binom{n+1}{m+1} = \binom{n}{m+2}$$
given by $n = F_{2j+2} F_{2j+3} - 1$, $m = F_{2j} F_{2j+3}-1$, where $F_j$ denotes the $j^{th}$ Fibonacci number.  See also \cite{jenkins} for further analysis of equations of this type.  Besides this infinite family of collisions, and the ``trivial'' ones generated by \eqref{symmetry}, $\binom{n}{0} = 1$, and $\binom{n}{m} = \binom{\binom{n}{m}}{1}$, the only further known collisions between binomial coefficients arise from the identities $\binom{n}{2} = \binom{n'}{m'}$ for
$$ (n,n',m') = (16,10,3), (21, 2, 4), (52, 22, 3), (120, 36, 3), (153, 19, 5), (221, 17, 8)$$
as well as the example in \eqref{nam}.
It was conjectured by de Weger \cite{deweger} that these above examples generate all the non-trivial collisions $\binom{n}{m} = \binom{n'}{m'} = t$; this would of course imply Singmaster's conjecture.  This conjecture has been verified for $(m,m') = (2,3)$ \cite{avanesov}, for $(m,m') = (2,4)$ \cite{pinter}, \cite{deweger-1}, for $(m,m') = (2,5)$ \cite{bugeaud}, for $(m,m') = (3,4)$ \cite{mordell}, \cite{deweger}, and $(m,m') = (2, 6), (2, 8), (3, 6), (4, 6), (4, 8)$ \cite{stroeker}, and for $n \leq 10^6$ or $t \leq 10^{60}$ in \cite{bbw}.
\end{remark}

\begin{remark}
In view of Theorem \ref{main}, we now see that to prove Conjecture \ref{singmaster}, we may restrict attention without loss of generality to the region $2 \leq m \leq \exp(\log^{2/3+\eps} n)$  for any fixed $\eps>0$, or equivalently (by \eqref{n-form-alt}) to $2 \leq m \leq \frac{\log t}{\log^{3/2-\eps}_2 t}$ for any fixed $\eps>0$.  It follows from the conjecture of de Weger \cite{deweger} mentioned in Remark \ref{fib} that for $t$ sufficiently large there is only at most one solution in this region, that is to say all but a finite number of binomial coefficients $\binom{n}{m}$ for $2 \leq m \leq \exp(\log^{2/3+\eps} n)$ are distinct.  In this direction, the number of solutions to the equation $\binom{n}{m} = \binom{n'}{m'}$ for fixed $2 \leq m < m'$ has been shown (via Siegel's theorem on integral points) to be finite in \cite{bst} (see also the earlier result \cite{kiss} treating the case $(m,m')=(2,p)$ for an odd prime $p$).    This implies that there are no collisions in the regime $2 \leq m \leq w(n)$ if $w$ is a function of $n$ that goes to infinity sufficiently slowly as $n \to \infty$.  Unfortunately, due to the reliance on Siegel's theorem, the function $w$ given by these arguments is completely ineffective.
\end{remark}

\begin{remark}  For some previous bounds of this type, in \cite{aeh} it was shown that the number of solutions to \eqref{bnt} in the range $n^{5/6} \leq m \leq n/2$ was $O(\log^{3/4} t)$, while the arguments in \cite[\S 7]{kane-1}, after some manipulation, show that the number of solutions to \eqref{bnt} in the range $\exp(\log^{1/2+\eps} n) \leq m \leq n^{5/6}$ is $O_\eps( \log t / \log_2^3 t )$.
\end{remark}

\begin{remark} The implied quantitative bounds in the hypothesis ``$t$ is sufficiently large depending on $\eps$'' are effective; however, we have made no attempt whatsoever to optimize them in this paper, and will likely be too large to be of use in numerical verification of Singmaster's conjecture in their current form.
\end{remark}

\subsection{An analog for falling factorials} The methods used to handle the equation \eqref{bnt} can be modified to treat the variant equation
\begin{equation}\label{falling}
 (n)_m = t
\end{equation}
for integers $1 \leq m < n$ and $t \geq 2$, where $(n)_m$ denotes the falling factorial
$$ (n)_m \coloneqq n (n-1) \dots (n-m+1) = m! \binom{n}{m}.$$
We exclude the cases $m=0,m=n$ since $(n)_0 = 1$ and $(n)_n = (n)_{n-1} = n!$.
In \cite[Theorem 4]{aeh} it was shown that for any $t \geq 2$ the number of integer solutions $(m,n)$ to \eqref{falling} with $1 \leq m \leq n-1$ is $O( \sqrt{\log t})$.  We do not directly improve upon this bound here, but can obtain an analogue of Theorem \ref{main}:

\begin{theorem}[Falling factorial multiplicity in the interior]\label{main-falling}  Let $0 < \eps < 1$, and assume that $t$ is sufficiently large depending on $\eps$.  Then there are at most two integer solutions to \eqref{falling} in the region $\exp( \log^{2/3 + \eps} n ) \leq m < n$.
\end{theorem}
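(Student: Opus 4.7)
The plan is to adapt the proof of Theorem \ref{main} to the falling factorial setting, using the identity $(n)_m = m!\binom{n}{m}$. A solution to $(n)_m=t$ is equivalently a solution to $\binom{n}{m}=t/m!$, but because the right-hand side depends on $m$, one cannot apply Theorem \ref{main} as a black box: the underlying arguments must be rerun with this dependence accounted for. I would split the region $\exp(\log^{2/3+\eps} n) \leq m < n$ into two subregions with markedly different behavior: the ``left'' subregion $\exp(\log^{2/3+\eps} n) \leq m \leq n/2$, in which $n \asymp m t^{1/m}$ can be considerably larger than $\log t$; and the ``right'' subregion $n/2 < m < n$, in which $(n)_m = n!/(n-m)! \geq (n/2)!$ forces $n \ll \log t/\log_2 t$.

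In the right subregion, set $k=n-m \in [1,n/2)$, so $(n)_m = n!/k!$. Any prime $p$ in the long interval $(n/2, n]$ divides $n!/k!$ exactly once, so the largest prime factor of $t$ is forced to equal the largest prime $P(n) \leq n$. Consequently two right-subregion solutions $(n_1,m_1)$ and $(n_2,m_2)$ must satisfy $P(n_1)=P(n_2)$, pinning $n_1$ and $n_2$ to the same interval $[P,P')$ between consecutive primes, which by Baker--Harman--Pintz has length $\ll n^{0.525}$. The rapid growth of $n!/k!$ in both $n$ and $k$ then permits at most two such solutions. A size argument rules out mixed solutions: a left-subregion solution has $n \gg \log t$, whereas a right-subregion solution has $n \ll \log t/\log_2 t$, an inconsistency once $t$ is large.

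For the left subregion, I would rerun the proof of Theorem \ref{main} on the equation $\binom{n}{m}=t/m!$. The analytic inputs (zero-density estimates, primes in short intervals, etc.) do not depend on the equation being solved and carry over unchanged. The $m$-dependent extra factor $m!$ only strengthens the divisibility constraints on $t$, since primes up to $m$ acquire additional multiplicity. In particular, the ``witnessing prime'' produced in the proof of Theorem \ref{main} to distinguish two binomial coefficients is, by construction, much larger than $m$ (and thus larger than any prime factor of $m!$), so it fulfils the same distinguishing role between $(n_1)_{m_1}$ and $(n_2)_{m_2}$ as it did between $\binom{n_1}{m_1}$ and $\binom{n_2}{m_2}$, and the ``at most two'' conclusion survives.

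The main obstacle, I expect, is precisely this verification that the detailed sieve argument of Theorem \ref{main} passes through for the modified right-hand side. The witnessing prime must be placed in a suitable range (larger than $m_2$ but within the range controlling the $\binom{n_i}{m_i}$ factors), and some care is needed near the boundary $m \approx n/2$ to avoid double-counting between the two subregions; however, the factor-of-two slack between the bounds ``at most two'' in the full left half and ``at most one'' in a narrower sub-half of Theorem \ref{main} suggests that this boundary can be absorbed into the at-most-two count rather than requiring a separate delicate treatment.
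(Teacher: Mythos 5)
There are genuine gaps. The most serious is your exclusion of ``mixed'' solutions: you claim a left-subregion solution ($m \leq n/2$) has $n \gg \log t$, but this is the binomial-coefficient intuition ($\binom{n}{m} \leq 2^n$) and it fails for falling factorials. When $m \asymp n$ (e.g.\ $m$ near $n/2$), one has $t = (n)_m \leq n^{n/2}$, hence only $n \gg \log t/\log_2 t$, which is exactly the same order as your right-subregion solutions; so the size dichotomy does not separate the two halves, mixed configurations are not ruled out, and your scheme can only deliver ``at most $2+2=4$'' rather than the claimed two. Relatedly, your right-subregion count is asserted rather than proved: pinning $n_1,n_2$ to a common prime gap via the largest prime factor of $n!/k!$ is fine, but ``the rapid growth of $n!/k!$ in both $n$ and $k$ then permits at most two such solutions'' is not an argument. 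Three solutions $n_1<n_2<n_3$ in one gap would only give two multiplicative identities $(n_i+1)\cdots n_{i+1} = (k_i+1)\cdots k_{i+1}$, and first-order growth considerations do not exclude this; note that the extremal families in the paper live precisely in this regime, so there is no slack. The paper needs second-order (convexity) information on $m \mapsto g_t(m)$ (Proposition \ref{derivi-falling} combined with Lemma \ref{integ}) to handle $m \leq n - C\log^2 n$, and a separate, genuinely delicate argument near the diagonal ($m > n - C\log^2 n$), using the slowly varying inverse-factorial function $h_t$ together with $2$-adic valuations, to exclude a third solution there; nothing in your proposal substitutes for either step.

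The left-subregion reduction also does not ``carry over'' as claimed. Two solutions of \eqref{falling} give $\binom{n_1}{m_1} = t/m_1!$ and $\binom{n_2}{m_2} = t/m_2!$ with \emph{different} right-hand sides, so the valuation identity \eqref{p-eq} underlying Proposition \ref{distance} fails and must be corrected by $v_p(m_1!/m_2!)$; this changes the covariance analysis, not merely ``strengthens divisibility constraints''. Moreover, your claim that the witnessing prime in the proof of Theorem \ref{main} is ``much larger than $m$'' is incorrect: the random prime there has size $P \asymp \exp(\log^{2/3+\eps/2}(n+n'))$, which is at most of the order of $m$ in the region considered. The paper instead proves a falling-factorial distance estimate of a different shape (Proposition \ref{distance-falling}): a $2$-adic valuation comparison gives $m-m' \ll \log(n+n')$ directly, and a single witnessing prime of size $\asymp m$, located via the equidistribution estimate, gives $n'-n \ll m/\log^A m$ when $m \leq n^{2/3}$; the endgame is then the $g_t$-convexity and near-diagonal arguments described above, with no splitting at $m=n/2$ and with all solutions counted together against the maximal-$m$ solution. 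To repair your write-up you would essentially need to reconstruct these components.
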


We establish this result in Section \ref{falling-sec}.  Note that the bound of two is best possible, as can be seen from the infinite family of solutions
$$ (a^2 - a)_{a^2 - 2a} = (a^2 - a - 1)_{a^2 - 2a + 1}$$
for any integer $a>2$, and more generally
$$ ( (a)_b)_{(a)_b - a} = ( (a)_b - 1 )_{(a)_b - a + b - 1}$$
whenever $2 \leq b < a$ are integers.

\subsection{Strategy of proof}

Theorem \ref{main} is a consequence of two Propositions that we now describe. The proof of Theorem \ref{main-falling} will follow a similar pattern as described here and we refer the reader to Section \ref{falling-sec} for details. 

\begin{proposition}[Distance estimate]\label{distance} Let $\varepsilon > 0$. Suppose we have two solutions $(n,m), (n',m')$ to \eqref{bnt} in the left half \eqref{left-half} of Pascal's triangle.  Then one has
$$ m' - m \ll_\eps \exp( \log^{2/3+\eps}(n+n') )$$
for any $\eps>0$.  Furthermore, if
$$ m, m' \geq \exp( \log^{2/3+\eps}(n+n') )$$
then we additionally have
$$ n' - n \ll_\eps \exp( \log^{2/3+\eps}(n+n') ).$$
\end{proposition}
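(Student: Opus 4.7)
The plan is to recast the equation $\binom{n}{m} = \binom{n'}{m'}$ as a product identity between falling factorials, then analyze both sides arithmetically using prime-counting estimates (the exponent $2/3$ in the target bound points directly at Vinogradov--Korobov-strength input).

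\textbf{Setup and product identity.} By the symmetry of the hypothesis I may assume $m \leq m'$; since $n \mapsto \binom{n}{m}$ is strictly increasing for fixed $m$ and both pairs lie in the left half \eqref{left-half}, the equation $\binom{n}{m} = \binom{n'}{m'}$ forces $n \geq n'$. Write $k := m' - m \geq 0$, $\ell := n - n' \geq 0$, and $H := \exp(\log^{2/3+\eps}(n+n'))$; the goal is to show $k \ll_\eps H$ in general, and also $\ell \ll_\eps H$ under the supplementary hypothesis $m, m' \geq H$. Clearing denominators in $\binom{n}{m} = \binom{n'}{m'}$ gives
\[
(n)_\ell \cdot (m')_k \;=\; (n-m)_{\ell+k},
\]
an identity between three falling factorials of total length $2(\ell+k)$ on each side (which can be verified, e.g., against the Fibonacci collisions of Remark \ref{fib}, where it reduces to Cassini's identity).

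\textbf{Arithmetic analysis.} Taking logarithms reduces the identity to
\[
\sum_{n' < j \leq n}\log j \;+\; \sum_{m < j \leq m'}\log j \;=\; \sum_{n'-m' < j \leq n-m}\log j,
\]
a relation among three sums of $\log$ over short intervals of lengths $\ell$, $k$, $\ell+k$. The leading Stirling/Euler--Maclaurin main terms on both sides cancel automatically (being equivalent to Stirling of the original equation $\log\binom{n}{m} = \log\binom{n'}{m'}$), while the next-order Stirling correction yields a smooth constraint of the form $n m'(n'-m') \approx n' m(n-m)$, i.e., a single smooth equation in the two unknowns $\ell, k$. The remaining flexibility is then pinned down by writing $\log j = \sum_{d\mid j}\Lambda(d)$ and comparing the arithmetic identity with its smooth approximation; applying PNT of Vinogradov--Korobov strength (together with Mertens-- and Chebyshev-type estimates to dispose of small-prime and higher prime-power contributions) forces both $\ell$ and $k$ to lie at scale $H$, with the exponent $2/3$ descending directly from the Vinogradov--Korobov zero-free region.

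\textbf{Main obstacle.} The delicate point is the appearance of the precision scale $H = \exp(\log^{2/3+\eps}(n+n'))$, which is well below any length at which unconditional PNT in short intervals is known. The argument must therefore be structured so that Vinogradov--Korobov is applied only at the \emph{long} scales $m$, $m'$, $n-m$, where it yields genuine asymptotic PNT, with $H$ emerging as the final precision through the size of the PNT error term and the rigidity of the combined smooth--arithmetic constraint rather than through any short-interval PNT. The supplementary hypothesis $m, m' \geq H$ in the second half of the proposition is exactly what is needed to ensure that these long intervals are wide enough for the same mechanism to deliver the bound on $\ell$ alongside the bound on $k$.
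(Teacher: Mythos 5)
Your product identity $(n)_\ell \cdot (m')_k = (n-m)_{\ell+k}$ and its logarithmic form are correct, but they are exact reformulations of $\binom{n}{m}=\binom{n'}{m'}$, and the step that is supposed to do all the work --- expanding $\log j = \sum_{d \mid j}\Lambda(d)$, comparing with a smooth approximation, and invoking PNT of Vinogradov--Korobov strength to ``force $\ell$ and $k$ to scale $H$'' --- has no mechanism behind it. The $\Lambda$-weighted aggregate over the three intervals is automatically true (it is just the log identity again), and its smooth approximation differs from it only by Stirling-type errors that involve no primes at all; there is no discrepancy on which a prime-counting theorem could act, and no reason why largeness of $k$ or $\ell$ would produce a contradiction. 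A sum over all primes weighted by $\log p$ carries strictly less information than the per-prime data, and it is precisely the per-prime data that must be exploited: the paper uses the equality of $p$-adic valuations $v_p\bigl(\binom{n}{m}\bigr)=v_p\bigl(\binom{n'}{m'}\bigr)$ for \emph{every} prime $p$ via Legendre's formula \eqref{np}, then draws a random prime $\p$ from $[P, P+P\log^{-100}P]$ at a carefully selected scale $P \approx \exp(\log^{2/3+\eps/2}(n+n'))$ (Lemma \ref{scale}), proves equidistribution of the fractional parts $\{N/\p\}$, $\{M/\p^j\}$ by Vaughan's identity plus Vinogradov's exponential-sum method (Proposition \ref{equid}), and extracts a contradiction from a covariance computation with a commensurability case analysis (the $\tfrac{1}{12ab}$ terms of Proposition \ref{covar}). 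Your intuition that Vinogradov-strength input is the source of the exponent $2/3$ is right in spirit, but it enters through the admissible range $N,M = O(\exp(\log^{3/2-\eps}P))$ in the exponential-sum/equidistribution estimate, not through the zero-free region or the error term in the prime number theorem; nothing in your sketch reaches that structure, and the role of the hypothesis $m,m' \geq \exp(\log^{2/3+\eps}(n+n'))$ (which in the paper feeds the scale-selection and separation dichotomy) is left as an unsupported remark.

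A secondary, smaller problem: the claimed ``next-order Stirling constraint'' $n m'(n'-m') \approx n' m(n-m)$ does not follow from $\log\binom{n}{m}=\log\binom{n'}{m'}$, because the leading Stirling terms and the half-log correction terms need not cancel separately --- only their totals are equal --- so even the smooth constraint you want to combine with the arithmetic input is not established. As it stands the proposal contains correct but contentless reformulations plus an unproved (and, in this aggregated form, unprovable) arithmetic rigidity claim, so the essential ideas of the proof are missing.
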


Note how this proposition is consistent with the example in Remark \ref{fib}. We shall discuss the proof of Proposition~\ref{distance} in Section~\ref{ss:proofmethods}. For the application to Theorem \ref{main}, Proposition \ref{distance} localizes all solutions to \eqref{bnt} to a region of small diameter.  To conclude Theorem \ref{main}, we can now proceed by adapting the Taylor expansion arguments of Kane \cite{kane-1}, \cite{kane-2}, in which one views $n$ as an analytic function of $m$ (keeping $t$ fixed) and exploits the non-vanishing of certain derivatives of this function; see Section \ref{analytic-sec}. This is what the proposition below accomplishes. In fact in our analysis only two derivatives of this function are needed (i.e., we only need to exploit the convexity properties of $n$ as a function of $m$). 

\begin{proposition}[Kane-type estimate] \label{prop:kane}
  Let $\varepsilon > 0$. 
  Suppose that $(n,m)$ is a solution to \eqref{bnt} in the left-half \eqref{left-half} of Pascal's triangle. There there exists at most one other solution $(n',m') \neq (n,m)$ to \eqref{bnt} with $m' < m$, $n' > n$ and
  $$
  |m - m'| + |n - n'| \ll \exp((\log_2 t)^{1 - \varepsilon}).
  $$
\end{proposition}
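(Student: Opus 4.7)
The plan is to view $n$ as a smooth function of $m$ via the implicit equation $\binom{f(m)}{m}=t$ (where $\binom{\cdot}{\cdot}$ is extended to real arguments by the Gamma function), and to exploit strict convexity of $f$ on the left-half of Pascal's triangle. By the implicit function theorem $f$ is real-analytic in the relevant range, since $\partial_n\binom{n}{m}>0$ for $m\le n/2$. Implicit differentiation of $\log\binom{f(m)}{m}=\log t$ together with Stirling's approximation (or the asymptotics of the digamma function) yields
$$f'(m) \asymp -\frac{n}{m}\log\frac{n}{m}, \qquad f''(m) \asymp \frac{n}{m^2}\log^2\frac{n}{m}$$
uniformly in the left-half region; in particular $f$ is strictly decreasing and strictly convex, and these asymptotics are stable on a window of radius $L\coloneqq \exp((\log_2 t)^{1-\varepsilon})$ around $(n,m)$, because $L$ is much smaller than both $m$ and $n$ in the relevant regime.

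Suppose for contradiction that there are two distinct further solutions $(n_1',m_1'),(n_2',m_2')$ with $m_i'<m$, $n_i'>n$ and $|m-m_i'|+|n-n_i'|\le L$. After ordering so that $m_1'<m_2'<m$, strict monotonicity of $f$ gives $n_1'>n_2'>n$, and the three points $(m_1',n_1'),(m_2',n_2'),(m,n)$ are integer lattice points on the graph of $f$. Form the quantity
$$\Delta\coloneqq (n-n_2')(m_2'-m_1') - (n_2'-n_1')(m-m_2'),$$
which is an integer. By the divided-difference mean-value theorem one has
$$\Delta = \tfrac{1}{2}(m-m_1')(m-m_2')(m_2'-m_1')\,f''(\xi)$$
for some $\xi\in(m_1',m)$. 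Strict convexity forces $\Delta>0$, so $\Delta\ge 1$.

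To derive the contradiction I would combine the $n$-distance bound with monotonicity to sharpen the $m$-window: since $|n_i'-n|\le L$ and $|f'|\asymp (n/m)\log(n/m)$ on the window, we have $|m-m_i'|\ll Lm/(n\log(n/m))$. Substituting into the identity for $\Delta$ and using $n/m\asymp t^{1/m}$ together with $m\log(n/m)\asymp \log t$ from Stirling, we obtain
$$\Delta \ll \frac{L^3 m}{n^2\log(n/m)} \ll \frac{L^3}{t^{2/m}\log t}.$$
Since $t^{2/m}\log t\ge\log t=\exp(\log_2 t)$ and $L^3=\exp(3(\log_2 t)^{1-\varepsilon})$, the right-hand side is $o(1)$ as $t\to\infty$, contradicting $\Delta\ge 1$ for $t$ sufficiently large depending on $\varepsilon$. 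The main technical obstacle will be proving the asymptotics of $f'$ and $f''$ uniformly across the left-half and ensuring their stability on the window, particularly for $m$ close to its extremes (very small $m$, or $m$ close to $n/2$); once those are in place, the divided-difference/integer-gap argument is routine.
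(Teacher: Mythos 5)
Your strategy is in essence the paper's own: define $f_t$ implicitly via the Gamma function, estimate its first two derivatives through digamma asymptotics, and exclude three nearby integer points on the graph by an integrality argument (your divided-difference identity $\Delta = \tfrac12 (m-m_1')(m-m_2')(m_2'-m_1')\,f''(\xi)$ is exactly the $k=2$ case of Lemma \ref{integ}). The genuine problem is the derivative estimate you lean on. The claim $|f'(m)| \asymp \frac{n}{m}\log\frac{n}{m}$ ``uniformly in the left-half region'' is false: by \eqref{form} the numerator $\psi(m+1)-\psi(n-m+1)$ vanishes when $n=2m$, and the correct estimate \eqref{one} is $-f_t'(m) \asymp (n-2m)\frac{\log t}{m^2}$, which degenerates as $m \to n/2$. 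Since the proposition must cover every $(n,m)$ in the left half---including the apex region, which genuinely occurs (central binomial coefficients)---your window-sharpening step $|m-m_i'| \ll Lm/(n\log(n/m))$, and hence the bound $\Delta \ll L^3 m/(n^2\log(n/m))$, fail there; this is not a technicality that can be ``put in place'' later, because the asymptotic you state simply does not hold in that range. The repair is a case split: for, say, $m \geq n/4$ one has $n \asymp m \asymp \log t$ and, by \eqref{ten}, $f''(\xi) \asymp 1/\log t$ on the window, so the crude bound $|m-m_i'| \leq L$ already gives $\Delta \ll L^3/\log t = o(1)$ with no sharpening needed; for $m \leq n/4$ the factor $n-2m \asymp n$ restores your claimed lower bound for $|f'|$ and your computation goes through.

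A second, smaller gap: you assert that $L=\exp((\log_2 t)^{1-\varepsilon})$ is much smaller than $m$ ``in the relevant regime,'' but $(n,m)$ is an arbitrary solution in the left half, and both the stability of the asymptotics over the window and the estimate $f''(\xi)\asymp f''(m)$ need $L = o(m)$. Justifying this requires precisely the monotonicity/first-derivative argument the paper runs first: if even one other solution lies within $L$, then $L \geq n'-n \geq |f_t'(m)|(m-m') \gg (n-2m)\frac{\log t}{m^2} \gg \frac{n}{m}-2$, forcing $n/m \ll L$ and hence (since $n \gg \log t$ while $L = \log^{o(1)} t$) $m \geq n^{9/10} \gg L$; otherwise the statement is vacuous. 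This is exactly how the paper reduces to the local statement (Corollary \ref{core}). With the corrected form of \eqref{one} and this reduction made explicit, your divided-difference argument is a sound, essentially equivalent rendering of the paper's proof.
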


With these two Propositions at hand it is easy to deduce Theorem \ref{main}. 

\begin{proof}[Deduction of Theorem \ref{main}]
Let $\eps>0$, let $t$ be sufficiently large depending on $\eps$, and let $(n,m)$ be the solution to \eqref{bnt} in the region
\begin{equation}\label{region}
\{ (n,m): \exp(\log^{2/3+\eps} n) \leq m \leq n/2\}
\end{equation}
with the maximal value of $m$ (if there are no such solutions then of course Theorem \ref{main} is trivial).  For brevity we allow all implied constants in the following arguments to depend on $\eps$.  If $(n',m')$ is any other solution in this region, then $m' < m$ and $n' > n$.  From \eqref{n-form-alt} we have
$$
 m \gg \frac{\log t}{\log n} \geq \frac{\log t}{\log^{\frac{1}{2/3+\eps}} m} \gg \frac{\log t}{\log_2^{\frac{1}{2/3+\eps}} t}
$$
thanks to \eqref{m-bound}.  From further application of \eqref{n-form-alt} we then have
$$ n \ll \exp( O( \log_2^{\frac{1}{2/3+\eps}} t ) ).$$
Similarly for $n'$.  Applying Proposition \ref{distance} (with $\eps$ replaced by a sufficiently small quantity), we conclude that
\begin{equation}\label{mim}
 m-m', n'-n \ll_{\eps'} \exp( O( \log^{1-\eps'}_2 t ) )
\end{equation}
whenever $1-\eps' > \frac{2/3}{2/3+\eps}$, or equivalently $\eps' < \frac{\eps}{2/3 + \eps}$.
The result now follows from Proposition \ref{prop:kane}.
\end{proof}

\begin{remark} The above arguments showed that for $t$ sufficiently large depending on $\eps$, there were at most four solutions to \eqref{bnt} in the region $\exp(\log^{2/3+\eps} n) \leq m \leq n - \exp(\log^{2/3+\eps} n)$.  A modification of the argument also shows that there cannot be exactly \emph{three} such solutions.  For if this were the case, we see from \eqref{symmetry} that there must be a solution $(n,m)$ with $n=2m$, so that $m \asymp \log t$ by Stirling's approximation.  For all other solutions $(n',m')$ to \eqref{bnt} we have $n' \geq n+1$, hence
$$ \binom{n}{n/2} = t = \binom{n'}{m'} \geq \binom{n+1}{m'}$$
and hence (by Stirling's approximation)
$$ \binom{n+1}{m'} \leq \left( \frac{1}{2} + O\left(\frac{1}{n}\right)\right) \binom{n+1}{(n+1)/2}.$$
By Stirling's approximation (or the central limit theorem of de Moivre and Laplace) this forces $|m' - \frac{n+1}{2}| \gg \sqrt{n}$, thus $|m'-m| \gg m^{1/2}$.  But this contradicts \eqref{mim}.
\end{remark}

\subsection{Proof methods}
\label{ss:proofmethods}

We now discuss the method of proof of Proposition \ref{distance}, which is our main new contribution.
In contrast to the ``Archimedean'' arguments of Kane (such as Proposition \ref{prop:kane}) that use real and complex analysis of the binomial coefficients $\binom{n}{m}$, the proof of Proposition \ref{distance} relies more on ``non-Archimedean'' arguments, based on evaluating the $p$-adic valuations $v_p\left( \binom{n}{m} \right)$ for various primes $p$, defined as the number of times $p$ divides $\binom{n}{m}$.  From the classical Legendre formula
\begin{equation}\label{legendre} 
v_p( n! ) = \sum_{j=1}^\infty \left\lfloor \frac{n}{p^j} \right\rfloor,
\end{equation}
where $\lfloor x \rfloor$ is the integer part of $x$, we see that
\begin{equation}\label{np}
\begin{split}
v_p\left( \binom{n}{m} \right) &= \sum_{j=1}^\infty \left(\left\lfloor \frac{n}{p^j} \right\rfloor - \left\lfloor \frac{m}{p^j} \right\rfloor - \left\lfloor \frac{n-m}{p^j} \right\rfloor \right)\\
&= \sum_{j=1}^\infty \left(\left\{ \frac{m}{p^j} \right\} + \left\{ \frac{n-m}{p^j} \right\} - \left\{ \frac{n}{p^j} \right\} \right)
\end{split}
\end{equation}
where $\{x\} \coloneqq x - \lfloor x \rfloor$ denotes the fractional part of $x$.  Note that the summands here vanish whenever $p^j > n$.  From this identity we see that if $(n,m), (n',m')$ are two solutions to \eqref{bnt} then we must have
\begin{equation}\label{p-eq}
\sum_{j=1}^\infty \left(\left\{ \frac{m}{p^j} \right\} + \left\{ \frac{n-m}{p^j} \right\} - \left\{ \frac{n}{p^j} \right\}\right)  = \sum_{j=1}^\infty \left(\left\{ \frac{m'}{p^j} \right\} + \left\{ \frac{n'-m'}{p^j} \right\} - \left\{ \frac{n'}{p^j} \right\} \right)
\end{equation}
for all primes $p$.  Our strategy will be to apply this equation with $p$ set equal to a \emph{random} prime $\p$ drawn uniformly amongst all primes in the interval $[P, P+P\log^{-100} P]$ where the scale $P$ is something like $\exp( \log^{2/3+\eps/2}(n+n'))$, and inspect the distribution of the resulting random variables on the left and right-hand sides of \eqref{p-eq} in order to obtain a contradiction when $m,m'$ or $n,n'$ are sufficiently well separated.  In order to do this we need some information concerning the equidistribution of fractional parts such as $\{ \frac{n}{\p^j}\}$.  This will be provided by the following estimate, proven in Section \ref{equid-sec}. There and later the letter $p$ always denotes a prime.

\begin{proposition}[Equidistribution estimate]\label{equid}  Let $\eps>0$ and $P \geq 2$ and let $I$ be an interval contained in $[P,2P]$.  Let $M,N$ be real numbers with $M,N = O( \exp(\log^{3/2-\eps} P) )$, and let $j$ be a natural number. 
\begin{itemize}
\item[(i)]
For all $A > 0$,
\[
 \sum_{p \in I} e\left( \frac{N}{p} + \frac{M}{p^j} \right) = \int_I e\left( \frac{N}{t} + \frac{M}{t^{j}}\right)\ \frac{dt}{\log t} + O_{\eps,A}( P\log^{-A} P ).
\]
\item[(ii)] 
Let $W \colon \R^2 \to \C$ be a smooth $\Z^2$-periodic function.  Then, for all $A > 0$,
$$ \sum_{p \in I} W\left( \frac{N}{p}, \frac{M}{p^j} \right) = \int_I W\left( \frac{N}{t}, \frac{M}{t^{j}} \right)\ \frac{dt}{\log t} + O_{\eps,A}( \|W\|_{C^{3}} P \log^{-A} P ),$$
where
$$ \|W\|_{C^{3}} \coloneqq \sum_{j=0}^{3} \sup_{x \in \R^2} |\nabla^j W(x)|.$$
\end{itemize}
\end{proposition}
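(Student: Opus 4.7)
The plan begins with a reduction of (ii) to (i). Since $W$ is smooth and $\Z^2$-periodic, Fourier expansion gives $W(x_1,x_2) = \sum_{(h_1,h_2)\in\Z^2} \hat W(h_1,h_2) e(h_1 x_1 + h_2 x_2)$ with $|\hat W(h_1,h_2)| \ll \|W\|_{C^3} (1+|h_1|+|h_2|)^{-3}$ by integration by parts. Applying (i) to each Fourier mode, with $(h_1 N, h_2 M)$ replacing $(N,M)$, and using summability of $(1+|h|)^{-3}$ (together with a suitable truncation of the Fourier series), delivers (ii). So the substance is in proving (i).

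For (i), I would first pass from the sum over primes to a von Mangoldt--weighted sum via $\sum_{p\in I}f(p) = \sum_{n\in I}\Lambda(n)f(n)/\log n + O(\sqrt P)$ and partial summation in $1/\log t$, reducing matters to proving
\[
\sum_{n\in I}\Lambda(n) e(\phi(n)) = \int_I e(\phi(t))\,dt + O_{\eps,A}(P \log^{-A}P),
\]
where $\phi(t) = N/t + M/t^j$. Then apply a combinatorial identity for $\Lambda$ (Vaughan or Heath-Brown) to decompose the left side into Type~I sums $\sum_{d\leq D}\alpha_d\sum_{m:dm\in I}e(\phi(dm))$ and Type~II sums $\sum_{d\sim D_1}\sum_{m\sim D_2}\alpha_d\beta_m e(\phi(dm))$, with parameters $D,D_1,D_2$ chosen to avoid the critical range near $P^{1/2}$.

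For Type~I, Poisson summation in $m$ rewrites the inner sum as $d^{-1}\sum_{h\in\Z}\int_I e(\phi(y)-hy/d)\,dy$. The $h=0$ contributions assemble across the combinatorial decomposition into the desired main term $\int_I e(\phi(y))\,dy$. For $h\neq 0$, the phase $\phi(y)-hy/d$ has first derivative $\phi'(y)-h/d$, where $|\phi'(y)|\ll (N+jM)/P^2$, and higher derivatives of size $\asymp N/P^{k+1} + j^{k}M/P^{j+k}$; depending on whether $h/d$ lies in the narrow range of possible stationary points of $\phi'$, either repeated integration by parts or a stationary phase analysis of the resulting oscillatory integrals yields savings beating any fixed negative power of $\log P$. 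For Type~II, Cauchy--Schwarz in $d$ reduces the estimate to bounding the inner sum $\sum_d e(\phi(dm_1)-\phi(dm_2))$ for $m_1\neq m_2$, and the latter is controlled by van der Corput / exponent-pair bounds applied to the phase $\psi(d) = \phi(dm_1)-\phi(dm_2)$, whose $k$-th derivative is of order $N|m_1-m_2|/(d^{k+1}m_1 m_2)$ plus a smaller contribution from the $M/t^j$ term.

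The main obstacle is the Type~II step: one needs exponent-pair (or Vinogradov-type mean-value) estimates that save more than any fixed power of $\log P$ over the trivial $D_1$-bound for $\sum_d e(\psi(d))$. The hypothesis $N,M\ll\exp(\log^{3/2-\eps}P)$ is calibrated precisely so that the resulting exponential-sum bounds yield a saving of shape $\exp(-c(\log P)^{\delta})$ for some $\delta>0$, which dominates $\log^{-A}P$ for any fixed $A$. Optimizing the Type~I and Type~II parameters so that both error contributions are $\ll P\log^{-A}P$ simultaneously, and combining with the Type~I main term, completes the proof of (i).
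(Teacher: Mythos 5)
Your high-level plan coincides with the paper's: Fourier expansion reduces (ii) to (i), and (i) is attacked through a von Mangoldt weighting, Vaughan's identity, Type~I/Type~II sums, and Vinogradov-strength exponential sum bounds whose saving $\exp(-c(\log P)^3/(\log F)^2)$ is exactly what the hypothesis $N,M \ll \exp(\log^{3/2-\eps}P)$ is calibrated for. However, there is a genuine gap in how you produce the main term. You never split according to the size of $F \coloneqq |N|/P + |M|/P^j$. When $F$ is small (say $F \leq \log^{O(A)}P$; the extreme case $N=M=0$ already shows the problem), the phase is essentially non-oscillatory, and your claim that the Type~II bilinear sums can be bounded by $P\log^{-A}P$ is simply false: with a near-constant phase the Type~II piece is a fixed bilinear sum of Vaughan coefficients of size $P\log^{O(1)}P$, and it genuinely participates in reconstructing the main term $\int_I dt/\log t$ rather than being an error term; van der Corput and exponent pairs give nothing because there is no oscillation to exploit. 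For the same reason the main term cannot be recovered solely from the $h=0$ Poisson contributions of the Type~I sums. The paper avoids this with a dichotomy: for $F \leq \log^{CA}P$ the phase has small total variation, and the asymptotic follows from the prime number theorem with classical error term plus partial summation (no combinatorial decomposition at all); only for $F > \log^{CA}P$ is Vaughan's identity invoked, and in that regime the integral $\int_I e(N/t+M/t^j)\,dt$ is itself $\ll P\log^{-10A}P$, so every piece of the decomposition only needs to be shown small.

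A secondary issue: in your Type~II analysis you treat the $M/t^j$ part of the differenced phase as a ``smaller contribution''. It need not be; $|M|/P^j$ can dominate $|N|/P$, and the two parts can nearly cancel in some derivative of the phase. Vinogradov's estimate (which is unavoidable here, since $F$ may be superpolynomial in $P$, killing exponent pairs) requires two-sided control of all derivatives up to order about $\log F/\log X$, so one must first excise the $O(\log X)$ short subintervals on which some derivative of the combined phase $N'/t + M'/t^j$ is abnormally small, and only then apply Vinogradov (for $F \geq X^4$) or Weyl (for moderate $F$) on the remaining pieces; this is precisely how the paper's auxiliary estimate for exponential sums over integers is organized, and it is applied uniformly to both the Type~I inner sums and the Cauchy--Schwarz-differenced Type~II sums. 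Your sketch omits this excision, and without it the derivative hypotheses of the van der Corput/Vinogradov machinery can fail on portions of the summation range.
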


One can generalize this proposition to control the joint equidistribution of any bounded number of expressions of the form $\{ \frac{n}{p^j}\}$, but for our applications it will suffice to understand the equidistribution of pairs $\{\frac{N}{p}\}$, $\{\frac{M}{p^j}\}$. 

When it comes to the proof of Proposition \ref{equid}, the first step is to use Fourier expansion to reduce part (ii) of the proposition to part (i).  For part (i), the case where $\frac{|N|}{P}+\frac{|M|}{P^j}$ is small (say $\leq \log^{O(A)} P$) is easily handled using the prime number theorem with classical error term. In the regime where $\frac{|N|}{P}+\frac{|M|}{P^j}$ is large, we use Vaughan's identity to decompose the sum in (i) into type I and II sums, and assert that these exhibit cancellation; the type I and II bounds are given in \eqref{type-i} and \eqref{type-ii}.

Both type I and type II sums can be handled using Vinogradov's bound for sums of the form $\sum_{n\in I}e(f(n))$ with $f$ smooth, although we need to first cut from $I$ small intervals around zeros of the first $\log P$ derivatives of $N/t+M/t^j$. This way we obtain that the sum in (i) exhibits cancellation. It is here that the restriction $N,M=O(\exp(\log^{3/2-\varepsilon} P))$ arises; even under the Riemann hypothesis we do not know how to relax this requirement\footnote{Using standard randomness heuristics one could tentatively conjecture that this restriction $N,M=O(\exp(\log^{3/2-\varepsilon} P))$ could be relaxed to $N,M = O( \exp(P^c) )$ for some constant $c>0$; this would improve the range $\exp(\log^{2/3+\eps} n) \leq m \leq n/2$ in Theorem \ref{main} to $\log^C n \leq m \leq n/2$ for some constant $C>0$.}.

Once the equidistribution estimate, Proposition \ref{equid}, is established, the analysis of the distribution of both sides of \eqref{p-eq} is relatively straightforward, as long as the scale $P$ is chosen so that the powers $P^j$ do not lie close to various integer combinations of $m,n,m',n'$.  However, there are some delicate cases when two of the numbers $n, m, n-m, n', m', n'-m'$ are ``commensurable'' in the sense that one of them is close to a rational multiple of the other, where the rational multiplier has small height.  Commensurable integers are also known to generate some exceptional examples of integer factorial ratios \cite{bober}, \cite{bober-2}, \cite{sound-ratio}.  Fortunately, we can handle these cases in our context by an analysis of covariances between various fractional parts $\{ \frac{n_1}{\p} \}, \{ \frac{n_2}{\p} \}$, in particular taking advantage of the fact that these covariances are non-negative up to small errors, and small unless $n_1,n_2$ are very highly commensurable.

\subsection{Acknowledgments}

KM was supported by Academy of Finland grant no. 285894. MR acknowledges the support of NSF grant DMS-1902063 and a Sloan Fellowship. XS was supported by NSF grant DMS-1802224. TT was supported by a Simons Investigator grant, the James and Carol Collins Chair, the Mathematical Analysis \& Application Research Fund Endowment, and by NSF grant DMS-1764034. JT was supported by a Titchmarsh Fellowship. 

\subsection{Notation}\label{notation-sec}

We use $X \ll Y$, $X = O(Y)$, or $Y \gg X$ to denote the estimate $|X| \leq CY$ for some constant $C$.  If we wish to permit this constant to depend on one or more parameters we shall indicate this by appropriate subscripts, thus for instance $O_{\eps,A}(Y)$ denotes a quantity bounded in magnitude by $C_{\eps,A} Y$ for some quantity $C_{\eps,A}$ depending only on $\eps,A$.  We write $X \asymp Y$ for $X \ll Y \ll X$.

We use $1_E$ to denote the indicator of an event $E$, thus $1_E$ equals $1$ when $E$ is true and $0$ otherwise.  

We let $e$ denote the standard real character $e(x) \coloneqq e^{2\pi ix}$.
\section{Derivative estimates}\label{analytic-sec}

We generalize the binomial coefficient $\binom{n}{m}$ to real $0 \leq m \leq n$ by the formula
$$\binom{n}{m} \coloneqq \frac{\Gamma(n+1)}{\Gamma(m+1)\Gamma(n-m+1)}$$
where 
$$\Gamma(x) \coloneqq \frac{e^{-\gamma x}}{x} \prod_{n=1}^\infty 	\left(1+\frac{x}{n}\right)^{-1} e^{x/n}$$
is the Gamma function (with $\gamma$ the Euler--Mascheroni constant).  This is of course consistent with the usual definition of the binomial coefficient. Observe that the digamma function 
$$\psi(x) \coloneqq \frac{\Gamma'}{\Gamma}(x) = -\gamma + \sum_{n=0}^\infty \frac{1}{n+1} - \frac{1}{n+x}$$
is a smooth increasing concave function on $(0,+\infty)$, with 
$$ \psi'(x) = \sum_{n=0}^\infty \frac{1}{(n+x)^2}$$
positive and decreasing, and
$$ \psi''(x) = -\sum_{n=0}^\infty \frac{2}{(n+x)^3}$$
negative.  For future reference we also observe the standard asymptotics
\begin{align}
 \psi(x) &= \log x + O\left(\frac{1}{x}\right) \label{psi-1}\\
 \psi'(x) &= \frac{1}{x} + O\left(\frac{1}{x^2}\right) \label{psi-2}\\
 \psi''(x) &= -\frac{1}{x^2} + O\left(\frac{1}{x^3}\right) \label{psi-3}
\end{align}
and the Stirling approximation
\begin{equation}\label{stirling}
\log \Gamma(x) = x \log x - x - \frac{1}{2} \log x + \log \sqrt{2\pi} + O\left(\frac{1}{x}\right)
\end{equation}  
for any $x \geq 1$; see e.g., \cite[\S 6.1, 6.3, 6.4]{abramowitz+stegun}.  One could also extend these functions meromorphically to the entire complex plane, but we will not need to do so here.

From the increasing nature of $\psi$ we see that $n \mapsto \binom{n}{m}$ is strictly increasing on $[m,+\infty)$ for fixed real $m > 0$, and from Stirling's approximation \eqref{stirling} we see that it goes to infinity as $n \to \infty$.  Thus for given $t > 1$, we see from the inverse function theorem that there exists a unique smooth function $f_t \colon [0,+\infty) \to [0,+\infty)$ with $f_t(m) > m$ for all $m$, such that
\begin{equation}\label{bnm-2}
 \binom{f_t(m)}{m} = t.
\end{equation}
In particular, the equation \eqref{bnt} holds for given integers $1 \leq m \leq n$ and $t \geq 2$ if and only if $n = f_t(m)$.
This function $f_t$ was analyzed by Kane \cite{kane-1}, who among other things was able to extend $f_t$ holomorphically to a certain sector, which then allowed him to estimate high derivatives of this function.  However, for our analysis we will only need to control the first few derivatives of $f_t$, which can be estimated by hand:

\begin{proposition}[Estimates on the first few derivatives]\label{derivi} Let $t,m$ be sufficiently large with $m \leq f_t(m)/2$.  Then
\begin{equation}\label{fatm}
 f_t(m) \asymp m t^{1/m}
\end{equation}
and
\begin{equation}\label{one}
-f'_t(m) \asymp (f_t(m) - 2m) \frac{\log t}{m^2}
\end{equation}
and
\begin{equation}\label{ten}
 f''_t(m) \asymp f_t(m) \left(\frac{\log t}{m^2}\right)^2.
\end{equation}
In particular, $f_t$ is convex and decreasing in this regime.
\end{proposition}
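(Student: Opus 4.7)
The plan is to prove all three estimates by implicit differentiation of the defining relation $\binom{f_t(m)}{m}=t$, writing it logarithmically via the Gamma function and exploiting the standard asymptotics \eqref{psi-1}--\eqref{psi-3}. Set $F(m,n)\coloneqq\log\Gamma(n+1)-\log\Gamma(m+1)-\log\Gamma(n-m+1)-\log t$, so $F(m,f_t(m))=0$ in our region. Writing $n=f_t(m)$ throughout, the partial derivatives are immediately
$$F_m=\psi(n-m+1)-\psi(m+1),\quad F_n=\psi(n+1)-\psi(n-m+1),$$
and analogous second-order expressions involving $\psi'$. First, for \eqref{fatm} I would use the crude sandwich $((n-m)/m)^m\leq (n-m)^m/m!\leq \binom{n}{m}\leq n^m/m!\leq (en/m)^m$ together with $m\leq n/2$ (so $n-m\asymp n$) to pin down $n\asymp mt^{1/m}$. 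A closely related Stirling computation yields the key identity I will repeatedly invoke below:
\begin{equation*}
\log t=m\log(n/m)+(n-m)\log(n/(n-m))+O(\log n).
\end{equation*}

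For \eqref{one}, implicit differentiation gives $-f'_t(m)=F_m/F_n$, which is non-negative in our region since $\psi$ is increasing and $n-m\geq m$. Writing both numerator and denominator as integrals of $\psi'$ and using $\psi'(x)\asymp 1/x$, one has $F_n\asymp\log((n+1)/(n-m+1))\asymp m/n$. For the numerator I would split into two regimes: if $n\leq 3m$ the integrand is uniformly $\asymp 1/m$ on $[m+1,n-m+1]$, so $F_m\asymp (n-2m)/m$; if $n\geq 3m$ then $F_m\asymp\log(n/m)$ and $n-2m\asymp n$. In regime one, $n\asymp m$ so $\log t\asymp m$ and both $-f'_t(m)$ and $(n-2m)\log t/m^2$ are $\asymp (n-2m)/m$. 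In regime two, $\log t\asymp m\log(n/m)$ and both quantities are $\asymp n\log(n/m)/m$. This is the step where I expect the main care to be required, since $F_m$ vanishes at $n=2m$ and the target factor $(n-2m)$ must be extracted from an integral of $\psi'$ with matching upper and lower bounds.

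For \eqref{ten}, a second differentiation of $F_m+F_n f'_t(m)=0$ yields
\begin{equation*}
f''_t(m)=-\frac{F_{mm}+2F_{mn}f'_t(m)+F_{nn}(f'_t(m))^2}{F_n}.
\end{equation*}
The crucial structural point is that in our region every summand in the numerator has the \emph{same} sign: $F_{mm}=-\psi'(m+1)-\psi'(n-m+1)<0$; $F_{mn}=\psi'(n-m+1)>0$ paired with $f'_t(m)\leq 0$; and $F_{nn}=\psi'(n+1)-\psi'(n-m+1)<0$ by monotonicity of $\psi'$, paired with $(f'_t(m))^2\geq 0$. Hence $f''_t(m)>0$ (the promised convexity) and the three positive contributions to $-(F_{mm}+2F_{mn}f'_t+F_{nn}(f'_t)^2)$ can be estimated independently using $\psi'(x)\asymp 1/x$ and $|\psi''(x)|\asymp 1/x^2$ (so that $|F_{nn}|\asymp m/n^2$ by integrating $\psi''$ over $[n-m+1,n+1]$):
$$-F_{mm}\asymp 1/m,\quad -2F_{mn}f'_t(m)\asymp|f'_t(m)|/n,\quad -F_{nn}(f'_t(m))^2\asymp m(f'_t(m))^2/n^2.$$

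Plugging the first-derivative bounds from Step~2 into these expressions and splitting once more into the regimes $n\leq 3m$ and $n\geq 3m$, I expect a routine case check to verify that the sum is always $\asymp (\log t)^2/m^3$: in the small-$n$ regime the $-F_{mm}$ term dominates, in the large-$n$ regime the $-F_{nn}(f'_t)^2$ term dominates (carrying the $\log^2(n/m)$ factor), and intermediate values have all three terms comparable. Dividing by $F_n\asymp m/n$ gives $f''_t(m)\asymp n(\log t/m^2)^2$. The main obstacle, apart from careful bookkeeping of the regimes in Step~2, will be confirming the matching lower bound in Step~3: the upper bound is immediate from the triangle inequality, but for the lower bound one needs the sign coherence noted above together with identifying which of the three terms attains the claimed size in each regime.
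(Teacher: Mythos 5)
Your proposal is correct and follows essentially the same route as the paper: implicit differentiation of the $\log\Gamma$ relation, the first-derivative formula $f'_t=-F_m/F_n$, and the asymptotics \eqref{psi-1}--\eqref{psi-3}, with $F_n\asymp m/n$ and the two-regime analysis of $F_m$ matching the paper's treatment. The only difference is bookkeeping in the second-derivative step (you keep the Hessian entries $F_{mm},F_{mn},F_{nn}$ and exploit that all three numerator contributions are nonnegative, whereas the paper substitutes the formula for $f'_t$ and regroups into squared $\psi$-differences times $\psi'$), and your case check there goes through as claimed.
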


The bound \eqref{fatm} can be viewed as a generalization of \eqref{n-form} to non-integer values of $n,m,t$.

\begin{proof}  Taking logarithms in \eqref{bnm-2} we have
\begin{equation}\label{logf}
 \log \Gamma(f_t(m)+1) - \log \Gamma(f_t(m)-m+1) - \log \Gamma(m+1) = \log t.
\end{equation}
Writing $n=f_t(m) \geq 2m$, we thus see from the mean value theorem that
$$ m \psi(n - \theta m + 1) - \log \Gamma(m+1) = \log t$$
for some $0 \leq \theta \leq 1$ depending on $t,m$.  Applying \eqref{psi-1}, we conclude that
$$ \log(n - \theta m) = \frac{1}{m} ( \log t + \log \Gamma(m+1) ) + O( \frac{1}{n} )$$
which implies that
$$ n \asymp n - \theta m \asymp \exp( \frac{1}{m} ( \log t + \log \Gamma(m+1) ) )$$
and the claim \eqref{fatm} then follows from Stirling's approximation \eqref{stirling}.

If we differentiate \eqref{logf} we obtain
\begin{equation}\label{deriv}
 f'_t(m) \psi( f_t(m)+1) - (f'_t(m)-1) \psi(f_t(m)-m+1) - \psi(m+1) = 0.
\end{equation}
In particular we obtain the first derivative formula
\begin{equation}\label{form}
f'_t(m) = \frac{\psi(m+1) - \psi(n-m+1)}{\psi(n+1) - \psi(n - m + 1)}.
\end{equation}
From \eqref{psi-2} and the mean value theorem we have
\begin{equation}\label{psini}
 \psi(n+1) - \psi(n - m + 1) \asymp \frac{m}{n}
\end{equation}
while from either the mean-value theorem and \eqref{psi-2} (if $m \asymp n$) or from \eqref{psi-1} (if say $m \leq n/4$) we see that
$$ \psi(n-m+1) - \psi(m+1) \asymp \frac{n-2m}{n} \log \frac{n}{m}.$$
We conclude that
$$ -f'_t(m) \asymp \frac{n-2m}{m} \log \frac{n}{m}$$
and the claim \eqref{one} follows from~\eqref{fatm}.

Differentiating \eqref{deriv} again, we conclude
$$
 f''_t(m) \psi( n+1) + (f'_t(m))^2 \psi'( n+1) - f''_t(m) \psi(n-m+1) 
-(f'_t(m)-1)^2 \psi'(n-m+1)  - \psi'(m+1) = 0.
$$
which we can rearrange using \eqref{form} as
\begin{align*}
f''_t(m) (\psi(n+1) - \psi(n-m+1))^3 &= (\psi(n+1) - \psi(n-m+1))^2 \psi'(m+1) \\
&\quad + (\psi(n+1)-\psi(m+1))^2 \psi'(n-m+1)\\
&\quad - (\psi(n-m+1)-\psi(m+1))^2 \psi'(n+1).
\end{align*}
From \eqref{psini}, \eqref{fatm} it thus suffices to show that
\begin{align*}
 (\psi(n+1) - \psi(n-m+1))^2 \psi'(m+1) \quad &  \\
+ (\psi(n+1)-\psi(m+1))^2 \psi'(n-m+1) \quad & \\
- (\psi(n-m+1)-\psi(m+1))^2 \psi'(n+1) &\asymp
\frac{m}{n^2} \log^2(n/m).
\end{align*}
The quantity $(\psi(n+1) - \psi(n-m+1))^2 \psi'(m+1)$ is non-negative and is of size $O( m/n^2 )$ by \eqref{psini}, \eqref{psi-2}.  Thus it will suffice to show that
$$ (\psi(n+1)-\psi(m+1))^2 \psi'(n-m+1) - (\psi(n-m+1)-\psi(m+1))^2 \psi'(n+1) \asymp
\frac{m}{n^2} \log^2(n/m).$$
We split the left-hand side as the sum of
$$ (\psi(m+1)-\psi(n+1))^2 (\psi'(n-m+1) - \psi'(n+1)) $$
and
\begin{align*}
&\psi'(n+1) [(\psi(n+1)-\psi(m+1))^2 - (\psi(n-m+1)-\psi(m+1))^2] \\
&=(\psi(n+1)-\psi(m+1) + \psi(n-m+1)-\psi(m+1)) (\psi(n+1)-\psi(n-m+1)) \psi'(n+1).
\end{align*}
From \eqref{psi-1}, \eqref{psi-3}, and the mean value theorem the first term is positive and comparable to $\frac{m}{n^2} \log^2 \frac{n}{m}$; similarly, from \eqref{psi-1}, \eqref{psi-2}, and~\eqref{psini} the second term is positive and bounded above by $O( \frac{m}{n^2} \log \frac{n}{m} )$.  The claim follows.
\end{proof}

To apply these derivative bounds, we use the following lemma that implicitly appears in \cite{kane-1}, \cite{kane-2}:

\begin{lemma}[Small non-zero derivative implies few integer values]\label{integ}  Let $k \geq 1$ be a natural number, and suppose that $f: I \to \R$ is a smooth function on an interval $I$ of some length $|I|$ such that one has the derivative bound
\begin{equation}\label{deriv-f}
 0 < \left| \frac{1}{k!} f^{(k)}(x) \right| < |I|^{-k(k+1)/2}
\end{equation}
for all $x \in I$.  Then there are at most $k$ integers $m \in I$ for which $f(m)$ is also an integer.
\end{lemma}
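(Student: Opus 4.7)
The plan is to argue by contradiction using the mean value theorem for divided differences. Suppose for contradiction that there exist $k+1$ distinct integers $m_0 < m_1 < \dots < m_k$ in $I$ with $f(m_i) \in \Z$ for each $i$. I would then consider the $k$-th divided difference
$$ D \coloneqq f[m_0, m_1, \dots, m_k] = \sum_{i=0}^{k} \frac{f(m_i)}{\prod_{j \neq i} (m_i - m_j)}.$$

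The first key step is to observe that clearing denominators,
$$ D \cdot \prod_{0 \leq i < j \leq k} (m_j - m_i) = \sum_{i=0}^{k} \pm f(m_i) \prod_{j<l; j,l\neq i} (m_l - m_j),$$
which is a $\Z$-linear combination of the integers $f(m_i)$, hence is itself an integer. Moreover, by the standard mean value theorem for divided differences, there exists $\xi \in I$ with $D = \frac{1}{k!} f^{(k)}(\xi)$. Since \eqref{deriv-f} forces $f^{(k)}$ to be nonvanishing (and have constant sign) on $I$, we get $D \neq 0$, and therefore the integer $D \cdot \prod_{i<j}(m_j - m_i)$ is nonzero and hence has absolute value at least $1$.

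The second step is simply to compare this lower bound against the upper bounds coming from the two hypotheses. Since all $m_i$ lie in an interval of length $|I|$, we have $|m_j - m_i| \leq |I|$, so
$$ \prod_{0 \leq i < j \leq k} |m_j - m_i| \leq |I|^{k(k+1)/2}.$$
Combined with $|D| = |f^{(k)}(\xi)|/k! < |I|^{-k(k+1)/2}$ from \eqref{deriv-f}, this gives
$$ \left| D \cdot \prod_{i<j}(m_j - m_i) \right| < 1,$$
contradicting the integrality lower bound above. Hence there can be at most $k$ integers in $I$ on which $f$ takes integer values.

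The only nontrivial input here is the mean value theorem for divided differences $D = f^{(k)}(\xi)/k!$, which I would just cite as a standard fact (it follows by iterating Rolle's theorem applied to the difference between $f$ and its degree-$k$ Lagrange interpolant through the $m_i$). Everything else is bookkeeping, so there is no real obstacle beyond this classical identity.
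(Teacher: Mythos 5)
Your proof is correct and is essentially the same as the paper's: the divided difference $D$ you use is exactly the leading coefficient of the Lagrange interpolant $P$ that the paper works with, and the mean value theorem for divided differences you cite is precisely the paper's iterated-Rolle step applied to $f-P$. The only cosmetic difference is that you phrase the contradiction as a nonzero integer of absolute value less than $1$, while the paper compares $\frac{1}{k!}P^{(k)}$ directly against the bound $|I|^{-k(k+1)/2}$.
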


\begin{proof}  Suppose for contradiction that there are $k+1$ distinct integers $m_1,\dots,m_{k+1} \in I$ with $f(m_1),\dots,f(m_{k+1})$ an integer.  By Lagrange interpolation, the function
\begin{equation}\label{p-def}
 P(x) \coloneqq \sum_{i=1}^{k+1} \prod_{1 \leq j \leq k+1: j \neq i} \frac{x-m_j}{m_i - m_j} f(m_i)
\end{equation}
is a polynomial of degree at most $k$ such that $f(x)-P(x)$ vanishes at $m_1,\dots,m_{k+1}$.  By many applications of Rolle's theorem (see \cite[Corollary 2.1]{kane-1}), there must then exist $x_* \in I$ such that $f^{(k)}(x_*) - P^{(k)}(x_*)$ vanishes.  From \eqref{p-def}, $\frac{1}{k!} P^{(k)}(x)$ (which is the degree $k$ coefficient of $P(x)$) is an integer multiple of $\frac{1}{\prod_{1 \leq i < j \leq k+1} |m_i-m_j|} \geq |I|^{-k(k+1)/2}$, and thus either vanishes or has magnitude at least $|I|^{-k(k+1)/2}$.  But this contradicts \eqref{deriv-f}.
\end{proof}

As an application of these bounds, we can locally control the number of solutions \eqref{bnt} in the region $n^{1/2+\eps} \leq m \leq n/2$, thus giving a version of Theorem \ref{main} in a small interval:

\begin{corollary}\label{core}  Let $0 < \eps < 1$, let $t$ be sufficiently large depending on $\eps$, and suppose that $(n,m)$ is a solution to \eqref{bnt} in the left half \eqref{left-half} of Pascal's triangle with $m \geq n^{1/2+\eps}$.  Then there is at most one other solution $(n',m')$ to \eqref{bnt} in the interval $m' \in [m - m^{\eps/10}, m]$.
\end{corollary}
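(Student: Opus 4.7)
The plan is to invoke Lemma~\ref{integ} with $k = 2$ applied to the function $f_t$ on the interval $I \coloneqq [m - m^{\eps/10}, m]$. Solutions $(n', m')$ to \eqref{bnt} with $m' \in I$ correspond bijectively to integers $m' \in I$ for which $f_t(m') \in \Z$, so the lemma will give at most two such $m'$; as $m$ itself is one of them, this yields the desired bound of at most one other solution. The only nontrivial input is the derivative hypothesis \eqref{deriv-f}, which for $k = 2$ asks that $0 < \tfrac{1}{2} |f_t''(x)| < |I|^{-3} = m^{-3\eps/10}$ uniformly for $x \in I$.

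For $x \in I$ one has $x \asymp m$, and since $f_t$ is decreasing on $I$ with $f_t(m) = n \geq 2m$ one obtains $f_t(x) \geq 2x$, so that Proposition~\ref{derivi} applies uniformly on $I$. A short calculation from \eqref{one} then gives $f_t(x) \asymp n$ throughout $I$, so \eqref{ten} yields $f_t''(x) > 0$ and
$$ f_t''(x) \asymp \frac{n (\log t)^2}{m^4}$$
on $I$. Using the Stirling-type estimate $\log t = \log \binom{n}{m} \asymp m \log(n/m)$, valid since $m \leq n/2$, this simplifies to $f_t''(x) \asymp n \log^2(n/m)/m^2$.

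The requirement $|f_t''(x)|/2 < m^{-3\eps/10}$ thus becomes
$$ n \log^2(n/m) \ll m^{2 - 3\eps/10}.$$
From $m \geq n^{1/2+\eps}$ the right-hand side is at least $n^{(1/2 + \eps)(2 - 3\eps/10)} = n^{1 + 37\eps/20 - 3\eps^2/10}$, which exceeds $n^{1+\eps}$ for $0 < \eps < 1$. Since $\log^2 n$ is dominated by any positive power of $n$ once $n$ is large, this holds for $t$ sufficiently large depending on $\eps$, and Lemma~\ref{integ} completes the proof. I do not foresee any real obstacle beyond the bookkeeping to verify that the asymptotics of Proposition~\ref{derivi} hold uniformly on $I$ rather than only at the endpoint $m$, and checking that the polynomial gain $n^\eps$ from the hypothesis $m \geq n^{1/2+\eps}$ absorbs the logarithmic loss in $\log(n/m)$.
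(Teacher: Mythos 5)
Your proposal is correct and follows essentially the same route as the paper: apply Proposition~\ref{derivi} uniformly on $I=[m-m^{\eps/10},m]$ to get $0<f_t''(x)\ll n(\log t/m^2)^2$, note that $m\geq n^{1/2+\eps}$ makes this smaller than $|I|^{-3}$, and invoke Lemma~\ref{integ} with $k=2$. The only cosmetic difference is that you convert $\log t$ via $\log t\asymp m\log(n/m)$ while the paper uses $\log t\ll m\log_2 t\ll m\log m$; both give the same polynomial-in-$n$ margin, and your monotonicity/bootstrapping remark for extending $x\leq f_t(x)/2$ over $I$ is a minor verification the paper also leaves implicit.
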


\begin{proof}  From \eqref{n-form-alt} and the hypothesis $n^{1/2+\eps} \leq m \leq n/2$ we have
\begin{equation}\label{man}
 \frac{\log t}{\log_2 t} \ll m \ll \log t.
\end{equation}
For $x$ in the interval $I \coloneqq [m - m^{\eps/10},m]$, we then have $\frac{\log t}{x} = \frac{\log t}{m} + O( m^{-2+\eps/10} \log t) = \frac{\log t}{m}  + O(1)$, and so we see from Proposition \ref{derivi} and \eqref{man} that $f_t(x) \asymp n$ and
$$
0 < |f''_t(x)| \ll n \left( \frac{\log t}{m^2}\right)^2 \ll \frac{n}{m^2} \log_2^2 t \ll \frac{n}{m^2} \log^2 m$$
for all $x \in I$.  Since $m \geq n^{1/2+\eps}$ and $t$ is sufficiently large depending on $\eps$, $m$ is also sufficiently large depending on $\eps$, and we have
$$
0 < |f''_t(x)| < |I|^{-3}$$
for all $x \in I$.  Applying Lemma \ref{integ}, there are at most two integers $m' \in I$ with $f_t(m')$ an integer.  Since $m$ is already one of these integers, the claim follows.
\end{proof}

The same method, using higher derivative estimates on $f_t$, also gives similar results (with weaker bounds on the number of solutions) for $m < n^{1/2+\eps}$; see \cite{kane-1}, \cite{kane-2}.  However, we will only need to apply this method in the $m \geq n^{1/2+\eps}$ regime here.

We are now ready to prove Proposition \ref{prop:kane}.
\begin{proof}[Proof of Proposition \ref{prop:kane}]
Let $\eps>0$, let $t$ be sufficiently large depending on $\eps$, and let $(n,m)$ be a solution to \eqref{bnt} in the region
\begin{equation}\label{region1}
\{ (n,m): \exp(\log^{2/3+\eps} n) \leq m \leq n/2\}
\end{equation}
For brevity we allow all implied constants in the following arguments to depend on $\eps$.  Suppose $(n',m')$ is another solution in this region with $m' < m$, $n' > n$ and$$
 m-m', n'-n \ll_{\eps'} \exp( O( \log^{1-\eps'}_2 t ) ).
$$
From \eqref{one} and convexity (and the bounds $m \ll \log t$ and $m-m' \geq 1$) we have
\begin{align*}
n'-n &= f_t(m')-f_t(m) \\
&\geq f'_t(m) (m'-m) \\
&\gg (n-2m) \frac{\log t}{m^2} (m-m') \\
&\gg \frac{n-2m}{m} \\
&= \frac{n}{m} - 2
\end{align*}
and thus
$$ n/m \ll_{\eps'} \exp( O( \log^{1-\eps'}_2 t ) )$$
From \eqref{n-form-alt} we have $n \gg \log t$, hence $\log^{1-\eps'}_2 t \ll \log^{1-\eps'} n$, and so for some constant $C > 0$, 
$m \geq n / \exp(C \log^{1-\eps'} n) \geq n^{9/10}$ (shrinking $\eps'$ slightly if necessary) if $t$ is sufficiently large depending on $\eps'$. The result now follows from Corollary \ref{core}.

\end{proof}

It remains to establish Proposition \ref{distance}.  This will be the objective of the next two sections of the paper.

\section{The distance bound}\label{distance-sec}

In this section we assume Proposition \ref{equid} and use it to establish Proposition \ref{distance}.

Throughout this section $0 < \eps < 1$ will be fixed; we can assume it to be small.  We may assume that $t$ is sufficiently large depending on $\eps$, as the claim is trivial otherwise.  We may assume that $m' < m$, hence also $n' > n$.  We assume for sake of contradiction that
at least one of the claims
\begin{equation}\label{meri}
 m - m' \geq \exp( \log^{2/3 + \eps} n' )
\end{equation}
and
\begin{equation}\label{meri-2}
 m, m', n'-n \geq \exp( \log^{2/3 + \eps} n' )
\end{equation}
is true, as the claim is trivial otherwise.  This allows us to select a ``good'' scale:

\begin{lemma}[Selection of scale]\label{scale}  With the above assumptions, there exists $P > 1$ obeying the following axioms:
\begin{itemize}
\item[(i)] ($m,m',n,n'$ not too large) We have $m,m',n,n' \leq \exp( \log^{\frac{3}{2} - \frac{\eps}{10}} P )$.  (In particular, $P$ will be sufficiently large depending on $\eps$, since otherwise $t = O_\eps(1)$.)
\item[(ii)]  (Dichotomy)  If $a,a',b,b'$ are integers with $|a|, |a'|, |b|, |b'| \leq \log^{1/100} P$, and $j$ is a natural number, then either
\begin{equation}\label{latch}
 |a m + a'm' + bn + b'n'| \leq P^j / \log^{1000} P
\end{equation}
or
$$ |a m + a'm' + bn + b'n'| \geq P^j \log^{1000} P.$$
\item[(iii)]  (Separation)  At least one of the statements
$$ m-m' \geq P \log^{100} P$$
and
$$ m, m', n'-n \geq P \log^{100} P$$
is true.
\end{itemize}
\end{lemma}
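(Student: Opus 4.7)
I will work in the variable $L := \log P$ and locate $L$ inside an interval $[R_-, R_+]$ along which conditions (i) and (iii) hold automatically; the heart of the matter is that the set of $L \in [R_-, R_+]$ violating (ii) has Lebesgue measure far smaller than the window width $R_+ - R_-$. Let $D := m - m'$ if \eqref{meri} holds and $D := \min(m, m', n' - n)$ if \eqref{meri-2} holds, so that in either case $D \geq \exp(\log^{2/3 + \eps} n')$ by hypothesis. Note also $n' = \max(m, m', n, n')$.

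I propose to take
$$R_- := \log^{2/3 + \eps/15} n', \qquad R_+ := \log D - 500 \log\log D,$$
and set $P := e^L$ for $L \in [R_-, R_+]$. With this choice, (iii) in the case matching $D$ follows immediately from $P \log^{100} P \leq D$, while (i) follows from the algebraic identity $(2/3 + \eps/15)(3/2 - \eps/10) = 1 + \eps/30 + O(\eps^2) > 1$, which gives $\log^{3/2 - \eps/10} P \geq R_-^{3/2 - \eps/10} \geq \log n'$ for $n'$ large. Since $\log D \geq \log^{2/3+\eps} n'$ dominates $R_- + 500 \log\log D$, the window satisfies $R_+ - R_- \geq \tfrac12 \log^{2/3 + \eps} n'$ for $n'$ large.

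The remaining task is (ii), which I will handle by a union bound on the forbidden set of $L$. For each integer tuple $(a, a', b, b')$ with $|a|, |a'|, |b|, |b'| \leq \log^{1/100} P$ and each $j \geq 1$, write $L' := am + a'm' + bn + b'n'$; a violation of (ii) for this tuple and scale forces $\log|L'| \in [jL - 1000\log L,\ jL + 1000\log L]$, carving out an interval in $L$ of length at most $2000(\log L)/j$. There are $O(L^{4/100})$ admissible tuples, and since $|L'| \ll \log^{1/100} P \cdot n'$, only $j \leq (\log n')/R_- + O(1) = O(\log^{1/3-\eps/15} n')$ can produce a nonempty such interval. Summing over tuples and $j$ yields a total forbidden measure
$$\ll L^{4/100} \cdot (\log L) \cdot \sum_{j \leq \log^{1/3} n'} \frac{1}{j} \ll (\log n')^{4/100} (\log\log n')^2,$$
which is $o(\log^{2/3+\eps} n')$. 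Pigeonhole then produces the required $L$ in $[R_-, R_+]$, and $P := e^L$ satisfies all three axioms.

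The method is just a covering/pigeonhole argument and needs no new analytic input; the whole plan is really exponent bookkeeping. I foresee the main obstacle to be verifying with genuine slack that the four exponents in play---the cap $\eps/100$ on coefficient sizes in (ii), the exponent $\eps/15$ in $R_-$, the exponent $\eps/10$ in (i), and the implicit slack in $R_+$---are compatible, so that the forbidden measure (a power of $\log n'$ determined by the coefficient count) is strictly smaller than the window width coming from the hypothesis on $D$. The choices above provide comfortable slack at every step.
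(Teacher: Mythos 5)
Your proposal is correct and is essentially the paper's argument: the paper likewise fixes a window of admissible scales (powers of two with $\log P \asymp \log^{2/3+\eps/2} n'$, which automatically gives (i) and (iii)) and then union-bounds, over the $O(\log^{4/100}P \cdot \log^{1/3} n')$ tuples $(a,a',b,b',j)$, the scales violating the dichotomy (ii), each tuple excluding only $O(\log_2 n')$ scales. Your continuous (Lebesgue-measure in $L=\log P$) formulation and slightly different window endpoints are cosmetic variations of the same pigeonhole.
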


\begin{proof}  We restrict $P$ to be a power of two in the range
$$\exp( \log^{2/3+\eps/2} n' ) \leq P \leq \exp(2 \log^{2/3+\eps/2} n' );$$
such a choice will automatically obey (i) since $n' > n > m > m'$ and (iii) since we assumed that either~\eqref{meri} or~\eqref{meri-2} holds. There are $\gg \log^{2/3+\eps/2} n'$ choices for $P$.  Some of these will not obey (ii), but we can control the number of exceptions as follows.  Firstly, observe that the conclusion \eqref{latch} will hold unless $j = O( \log^{1/3} n' )$, so we may restrict attention to this range of $j$.  The number of possible tuples $(a,a',b,b',j)$ is then 
$O( \log^{4/100} P \log^{1/3} n' )$.  For each such tuple, we see from the restriction on $P$ that the number of $P$ with
$$ P^j / \log^{1000} P < |a m + a'm' + bn + b'n'| < P^j \log^{1000} P$$
is at most $O(\log_2 n')$ (since $a m + a'm' + bn + b'n'$ is of size $O( (n')^2 )$, say).  Thus we see that the total number of $P$ which fail to obey (ii) is at most
$$ O( \log^{4/100} P \log^{1/3} n' \log_2 n' ) $$
which is negligible compared to the total number of choices, which is $\gg \log^{2/3+\eps/2} n'$.  Thus we can find a choice of $P$ which obeys all of (i), (ii), and (iii), giving the claim.
\end{proof}

Henceforth we fix a scale $P$ obeying the properties in Lemma \ref{scale}.
We now introduce a relation $\approx$ on the reals by declaring $x \approx y$ if $|x-y| \leq P / \log^{1000} P$.  Thus, by Lemma \ref{scale}(ii), if $am + a'm' + bn + b'n' \not \approx 0$ for $a,a',b,b'$ as in Lemma \ref{scale}(ii) then $|am + a'm' + bn + b'n'| \geq P \log^{1000} P$.  Also, from Lemma \ref{scale}(iii), at least one of the statements
$$ m \not \approx m'$$
and
$$ m, m', n'-n \not \approx 0$$
is true.

We introduce a random variable $\p$, which is drawn uniformly from the primes in the interval $I \coloneqq [P, P + P \log^{-100} P]$ (note that there is at least one such prime thanks to the prime number theorem).  From \eqref{p-eq} we surely have
$$ \sum_{j=1}^\infty \left(\left\{ \frac{m}{\p^j} \right\} + \left\{ \frac{n-m}{\p^j} \right\} - \left\{ \frac{n}{\p^j} \right\}\right)  = \sum_{j=1}^\infty \left(\left\{ \frac{m'}{\p^j} \right\} + \left\{ \frac{n'-m'}{\p^j} \right\} - \left\{ \frac{n'}{\p^j} \right\}\right).$$
We can restrict attention to those $j$ with $j \leq \log^{1/2} P$, since the summands vanish otherwise.
For any real number $N$, we may take covariances of both sides of this identity with the random variable $\{ \frac{N}{\p}\}$ to conclude that
\begin{equation}\label{jp}
 \sum_{j \leq \log^{1/2} P} \left(c_j( N, m) + c_j(N, n-m) - c_j(N, n)\right) = \sum_{j \leq \log^{1/2} P} \left(c_j( N, m') + c_j(N, n'-m') - c_j(N, n')\right)
\end{equation}
for any real number $N$, where the covariances $c_j(N,M)$ are defined as
\begin{align*}
c_j(N,M) &\coloneqq \E \left\{ \frac{N}{\p}\right\} \left\{ \frac{M}{\p^j}\right\} - \E \left\{ \frac{N}{\p}\right\} \E \left\{ \frac{M}{\p^j}\right\} \\
&\coloneqq \E \left(\frac{1}{2} - \left\{\frac{N}{\p}\right\}\right) \left( \frac{1}{2} - \left\{ \frac{M}{\p^j}\right\} \right) - \E \left(\frac{1}{2} - \left\{ \frac{N}{\p}\right\} \right) \E \left( \frac{1}{2} - \left\{ \frac{M}{\p^j}\right\}  \right).
\end{align*}

We now compute these covariances:

\begin{proposition}[Covariance estimates]\label{covar}  Let $N, M \in \{ m, n, m-n, m', n', n'-m'\}$, and $j$ be a natural number with $1 \leq j \leq \log^{1/2} P$.
\begin{itemize}
\item[(i)]  If $j \geq 2$, then $c_j(N,M) \ll \log^{-10} P$.
\item[(ii)]  If $j=1$ and $N \approx 0$ or $M \approx 0$, then $c_j(N,M) \ll \log^{-1000} P$.
\item[(iii)]  If $j=1$, $N, M \not \approx 0$ and there exist coprime natural numbers $1 \leq a,b \leq \log^{1/100} P$ such that $aN \approx bM$, then
$c_j(N,M) = \frac{1}{12 ab} + O( \log^{-1/1000} P)$.
\item[(iv)]  If $j=1$ and $N,M$ are not of the form in (ii) or (iii), then $c_j(N,M) \ll \log^{-1/1000} P$.
\end{itemize}
\end{proposition}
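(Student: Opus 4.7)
The strategy is to Fourier-expand each sawtooth $\{x\} - \tfrac12$ (using a Vaaler-type trigonometric polynomial of degree $K \asymp \log^{1/50} P$, with truncation error absorbable into the target bounds) and then invoke Proposition \ref{equid} to convert every Fourier mode into a smooth integral over $I$. After this double Fourier expansion, and after noting that any term with $k=0$ or $l=0$ cancels upon subtracting the product of expectations, the covariance reduces, up to negligible error, to
\[
\sum_{0 < |k|, |l| \leq K} \frac{1}{(2\pi i)^2\, kl}\Bigl(\tilde\E\, e(kN/t + lM/t^j) - \tilde\E\, e(kN/t)\; \tilde\E\, e(lM/t^j)\Bigr),
\]
where $\tilde\E\, g := \frac{1}{\pi(I)}\int_I g(t)\,\frac{dt}{\log t}$ denotes the log-weighted average over $I$.

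The core input for all four parts is the dichotomy in Lemma \ref{scale}(ii): any relevant integer combination $\xi = kN + lM$ (at level $j=1$), or the two-term factor $kN\cdot t^{j-1} + jlM$ appearing in the phase derivative (at level $j \geq 2$), is either ``$\approx 0$'' or so large that van der Corput / integration-by-parts drives the corresponding oscillatory integral to negligible size. In case (ii), where $N \approx 0$ say, Fourier analysis is not even needed: $\{N/\p\}$ has total variation $O(\log^{-1100}P)$ on $I$ (no integer is crossed since $|N/\p|<1$), so the covariance is trivially of that size. In case (iv), every relevant pair $(k,l)$ gives $|kN + lM| \geq P\log^{1000}P$, so every nonresonant integral contributes $O(\log^{-900}P/|kl|)$, summing to a harmless total. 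For case (i) with $j \geq 2$, I would first dispatch the easy subcases by the same near-constant argument whenever $|M| \leq P^j/\log^{1000}P$ or $|N| \leq P/\log^{1000}P$. In the remaining subcase both factors oscillate, and the two terms of the phase derivative $-kN/t^2 - jlM/t^{j+1}$ have genuinely distinct $t$-orders (since $j\geq 2$), so the derivative vanishes for at most one $t \in I$; a second-derivative van der Corput bound then yields the stated $\log^{-10}P$ decay.

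In case (iii), the resonant pairs are exactly $(k,l) = (an, -bn)$ for $n \in \Z \setminus \{0\}$: these are the only pairs with $|k|,|l|\leq K$ satisfying $kN + lM \approx 0$, and the uniqueness of the commensurability pair $(a,b)$ follows by applying Lemma \ref{scale}(ii) to $(a'b - ab')M$ for any two candidate pairs. For these resonant terms, $\tilde\E\, e(n(aN-bM)/t) = 1 + o(1)$ (since $|n(aN-bM)|/P = o(1)$ on $I$), whereas the marginal $\tilde\E\, e(anN/t)$ is negligible because $anN \not\approx 0$ under the hypothesis $N \not\approx 0$. Summing the resulting contributions yields
\[
\sum_{n \neq 0}\frac{1}{(2\pi i)^2 (an)(-bn)} \;=\; \frac{1}{4\pi^2 ab}\cdot 2\zeta(2) \;=\; \frac{1}{12\,ab},
\]
as claimed. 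The principal obstacle I anticipate is bookkeeping the error budget: the Vaaler truncation error, the equidistribution error from Proposition \ref{equid}, and the van der Corput savings must combine to produce the quantitative exponents stated, and the conspicuous looseness of the $\log^{-1/1000}P$ bound in (iii), (iv) suggests the bottleneck is an $L^2$-type Vaaler error of order $1/\sqrt{K}$ summed against the full double Fourier sum of cardinality $K^2$.
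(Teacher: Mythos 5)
Your plan has the same skeleton as the paper's proof (truncated Fourier expansion of the sawtooths, Proposition \ref{equid} to replace the prime average by an integral over $I$, the dichotomy of Lemma \ref{scale}(ii) to classify the combinations $kN+lM$, integration by parts for the non-resonant integrals, and the Basel-type sum over the resonant family $(k,l)=n(a,-b)$ giving $\frac{1}{12ab}$), but the uniform truncation $K\asymp\log^{1/50}P$ creates a genuine gap in two directions. First, for part (i) your own error budget is $K^{-1/2}\asymp\log^{-1/100}P$ (even an optimal Vaaler/Fej\'er truncation only yields an expected error $O(K^{-1})=O(\log^{-1/50}P)$), which is vastly larger than the asserted bound $\log^{-10}P$; with your parameters part (i) simply does not follow. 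The paper takes the truncation to be $\log^{20}P$ in case (i) and in the estimates for the marginal expectations $\E(\tfrac12-\{N/\p\})$, $\E(\tfrac12-\{M/\p^j\})$, which is permissible there because only the single-variable lower bounds $|N|\geq P\log^{1000}P$, $|M|\geq P^{j}\log^{1000}P$ (Lemma \ref{scale}(ii) with coefficient $1$) are needed, not the two-variable dichotomy. Second, in parts (iii) and (iv) your frequencies run up to $\log^{1/50}P$, whereas Lemma \ref{scale}(ii) guarantees the ``either $\approx 0$ or $\geq P\log^{1000}P$'' alternative only when the coefficients of $m,m',n,n'$ in $kN+lM$ are at most $\log^{1/100}P$. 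At the top of your range $kN+lM$ can land in the intermediate regime $P/\log^{1000}P<|kN+lM|<P\log^{1000}P$, where the integral over the short interval $I$ (length $P\log^{-100}P$) has no cancellation yet the pair is not resonant, so your classification of the resonant pairs as exactly the multiples of $(a,-b)$ is unjustified as stated. The remedy is the paper's case-dependent choice: truncate at $\log^{1/500}P$ in the $j=1$ cases so that $|k|+|l|$ stays within the range of Lemma \ref{scale}(ii); this enforced short truncation is precisely the source of the $\log^{-1/1000}P$ error in (iii), (iv), consistent with your own diagnosis of the bottleneck.

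A secondary point: in case (i) the second derivative of the phase in the $t$-variable, $2kN/t^{3}+j(j+1)lM/t^{j+2}$, can vanish inside $I$ when $kN$ and $lM$ have opposite signs, so a blanket second-derivative van der Corput bound is not available. You should either excise a short neighbourhood of the stationary point and apply the first-derivative test elsewhere (as the paper does after substituting $t=P/s$, where the derivative $a+jbs^{j-1}$ is monotone), or treat the zero of the second derivative separately; this is routine but missing as written.
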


\begin{remark} The term $\frac{1}{12ab}$ appearing in Proposition \ref{covar}(iii) is also the covariance between $\{n{\bf x}\}$ and $\{m{\bf x}\}$ for ${\bf x}$ drawn randomly from the unit interval whenever $n,m$ are natural numbers with $an=bm$ for some coprime $a,b$; see \cite[Section 2]{sound-preprint}.  Indeed, both assertions are proven by the same Fourier-analytic argument, and Proposition \ref{covar} endows the linear span of the six functions $\{ \frac{N}{\p} \}$ for $N \in \{m,n,m-n,m',n',n'-m'\}$ with an inner product closely related to the norm $N()$ studied in \cite{sound-preprint}, the structure of which is the key to obtaining a contradiction from our separation hypotheses on $n-n', m-m'$.
\end{remark}

\begin{proof}[Proof of Proposition~\ref{covar} assuming Proposition~\ref{equid}]
We first dispose of the easy case (ii).  If $N \approx 0$, then $\{ \frac{N}{\p} \} \leq \log^{-1000} P$, and the claim follows from the triangle inequality; similarly if $M \approx 0$ or actually if $M \leq P^j/\log^{1000} P$. Hence by Lemma \ref{scale}(ii), we may from now on assume that 
\[
N \geq P \log^{1000} P \quad \text{and} \quad M \geq P^j \log^{1000} P.
\]  

To handle the remaining cases we use the truncated Fourier expansion
\begin{equation}
\label{eq:1/2-xFourier}
\begin{split}
\frac{1}{2} - \{ x \} &= \sum_{0 < |n| \leq N_0} \frac{e(n x)}{2\pi i n} + O\left(\frac{1}{1+N_0 \mathrm{dist}(x, \Z)}\right) \\
&= \sum_{0 < |n| \leq N_0} \frac{e(n x)}{2\pi i n} + O\left(1_{\mathrm{dist}(x, \Z) \leq N_0^{-1/2}} + \frac{1}{N_0^{1/2}}\right)
\end{split}
\end{equation}
that holds for any $N_0 \geq 1$ (see e.g.~\cite[Formula (4.18)]{ik}).

Our primary tool is Proposition \ref{equid}. Note that, for $t \in I$, $\log t = \log P + O(\log^{-99} P)$, so that together with the prime number theorem Proposition~\ref{equid} implies that
\begin{equation}\label{eq}
 \E W\left( \frac{N}{\p}, \frac{M}{\p^{j}} \right) = \frac{1}{|I|} \int_I W\left( \frac{N}{t}, \frac{M}{t^j} \right)\ dt + O_{\eps}( \|W\|_{C^3} \log^{-99} P)
\end{equation}
for any smooth $\Z^2$-periodic $W \colon \R^2 \to \C$ and that, for any $M', N' = O(\exp(\log^{3/2-\varepsilon/2} P))$,
\begin{equation}\label{eq:EevsII}
\E e\left( \frac{N'}{\p} + \frac{M'}{\p^{j}} \right) = \frac{1}{|I|} \int_I e\left( \frac{N'}{t}+ \frac{M'}{t^j} \right)\ dt + O_{\eps}(\log^{-99} P)
\end{equation}

Applying \eqref{eq} with $W$ a suitable cutoff localized to the region $\{ (x,y): \mathrm{dist}(x, \Z) \leq 2 N_0^{-1/2} \}$ that equals one on $\{ (x,y): \mathrm{dist}(x, \Z) \leq N_0^{-1/2} \}$ chosen so that $\|W\|_{C^3} \ll N_0^{3/2}$, we see that, for any $N_0 \in [1, \log^{20} P]$ we have
$$
\P\left(\mathrm{dist}\left( \frac{N}{\p}, \Z\right) \leq N_0^{-1/2}\right)
\ll \frac{1}{|I|} \int_I 1_{\mathrm{dist}( \frac{N}{t}, \Z) \leq 2N_0^{-1/2}}\ dt + N_0^{-1/2}.$$
Since $N \geq P \log^{1000} P$, the first term on the right-hand side can be computed to be $O( N_0^{-1/2})$.  Thus
\begin{equation}
\label{eq:PdistNsmall}
\P\left(\mathrm{dist}\left( \frac{N}{\p}, \Z\right) \leq N_0^{-1/2}\right) \ll N_0^{-1/2}
\end{equation}
and a similar argument gives
\begin{equation}
\label{eq:PdistMsmall}
\P\left(\mathrm{dist}\left( \frac{M}{\p^j}, \Z\right) \leq N_0^{-1/2}\right) \ll N_0^{-1/2}.
\end{equation}

To prepare for the proofs of parts (i), (iii) and (iv), let us first show that, for $1 \leq j \leq \log^{1/2} P$, we have
\begin{equation}
\label{m-ast}
\E \left( \frac{1}{2} - \left\{\frac{M}{\p^j} \right\} \right) \ll \log^{-10} P.
\end{equation}
We use the Fourier expansion~\eqref{eq:1/2-xFourier} with $N_0 = \log^{20} P$. Averaging over $p \in I$ and applying~\eqref{eq:PdistMsmall} to handle the first error term, we see that
$$ \E \left( \frac{1}{2} - \left\{\frac{M}{\p^j} \right\} \right) = \sum_{0 < |m| \leq \log^{20} P} \frac{1}{2 \pi i m} \E e\left( m \frac{M}{\p^j} \right) + O(\log^{-10} P).$$
By the triangle inequality and \eqref{eq:EevsII}, it suffices to show that, for every non-zero integer $m = O(\log^{20} P)$,
$$ \frac{1}{|I|} \int_I e\left( m \frac{M}{t^j} \right)\ dt \ll \log^{-11} P.$$
Recalling that $M \geq P^j \log^{1000} P$, this estimate follows from a standard integration by parts (see e.g.~\cite[Lemma 8.9]{ik}).
Similarly
\begin{equation}
\label{n-ast}
\E \left(\frac{1}{2} - \left\{\frac{N}{\p} \right\}\right) \ll \log^{-10} P .
\end{equation}

Furthermore, using similarly~\eqref{eq:1/2-xFourier},~\eqref{eq:PdistNsmall},~\eqref{eq:PdistMsmall} and~\eqref{eq:EevsII}, we see that, whenever $1 \leq N_0 \leq \log^{20} P$,
\begin{equation}
\label{eq:NpMpFour}
\E \left(\frac{1}{2} - \left\{\frac{N}{\p} \right\}\right) \left(\frac{1}{2} - \left\{\frac{M}{\p^j} \right\} \right) = -\sum_{0 < |m|, |n| < N_0} \frac{1}{4\pi^2 mn} \frac{1}{|I|} \int_I e\left( n \frac{N}{t} + m \frac{M}{t^j} \right)\ dt + O\left(\frac{1}{N_0^{1/2}}\right).
\end{equation}

Now we are ready to prove (i), (iii), and (iv). Let us start with (i). In light of~\eqref{m-ast},~\eqref{n-ast} and~\eqref{eq:NpMpFour} with $N_0 = \log^{20} P$, it suffices to show that
$$ \frac{1}{|I|} \int_I e\left( n \frac{N}{t} + m \frac{M}{t^j} \right)\ dt \ll \log^{-11} P.$$
whenever $n,m = O(\log^{20} P)$ are non-zero integers. Applying a change of variables $t = P / s$, we reduce to showing that
\begin{equation}
\label{eq:sintclaim}
\int_{1/(1 + \log^{-100} P)}^1 e( as + bs^j )\ ds \ll \log^{-200} P
\end{equation}
(say), where $a \coloneqq nN/P$ and $b \coloneqq mM/P^j$.  By hypothesis, we have $|a|, |b| \geq \log^{1000} P$.  Since $2 \leq j \leq \log^{1/2} P$, the derivative $a+jbs^{j-1}$ of the phase $as+bs^j$ is at least $\log^{200} P$ outside of an interval of length at most $O(\log^{-200} P)$, and~\eqref{eq:sintclaim} now follows from a standard integration by parts (see e.g.~\cite[Lemma 8.9]{ik}).  This concludes the proof of (i).

Let us now turn to (iv). In light of~\eqref{m-ast},~\eqref{n-ast} and~\eqref{eq:NpMpFour} with $N_0 = \log^{1/500} P$, it suffices to show that 
$$ \frac{1}{|I|} \int_I e\left( \frac{nN+mM}{t} \right)\ dt \ll \log^{-1/500} P$$
whenever $n,m = O( \log^{1/500} P)$ are non-zero integers.
From the hypothesis (iv) and Lemma \ref{scale}(ii) (after factoring out any common multiple of $n$ and $m$), we have $|nN+mM| \geq P \log^{1000} P$.  The claim (iv) now follows from integration by parts.

Finally we show (iii).   In light of~\eqref{m-ast},~\eqref{n-ast} and~\eqref{eq:NpMpFour} with $N_0 = \log^{1/500} P$, it suffices to show that 
$$ -\sum_{0 < |n|,|m| \leq \log^{1/500} P} \frac{1}{4\pi^2 mn} \frac{1}{|I|} \int_I e\left( \frac{nN+mM}{t} \right)\ dt = \frac{1}{12ab} + O( \log^{-1/1000} P ).$$
Let us first consider those $n,m = O(\log^{1/500} P)$ for which $nN+mM \not \approx 0$. By Lemma \ref{scale}(ii) $|nN+mM| \geq P \log^{1000} P$ and similarly to case (iv), the contribution of such pairs $(n,m)$ is acceptable. 

Consider now the case $nN \approx -mM$ for some non-zero integers $n,m = O(\log^{1/500} P)$. By assumption also $aN \approx bM$ for some co-prime positive integers $a, b \leq \log^{1/100} P$. and hence by Lemma~\ref{scale}(ii) $-amM \approx bnM$ which contradicts the assumption $M \not \approx 0$ unless $(n,m)$ is a multiple of $(a,-b)$. On the other hand if $(n, m)$ is a multiple of $(a, -b)$, then $n N \approx -mM$ by Lemma~\ref{scale}(ii).

Thus it remains to show that
$$ \sum_{0 <|k| \leq \frac{\log^{1/500} P}{\max\{a,b\}}} \frac{1}{4\pi^2 k^2 ab} 
\frac{1}{|I|} \int_I e\left(\frac{kaN-kbM}{t} \right)\ dt = \frac{1}{12ab} + O( \log^{-1/1000} P ).$$
Since $aN \approx bM$ we have, for every $k \leq \log^{1/500} P$,
$$ \frac{1}{|I|} \int_I e\left(\frac{kaN -kbM}{t} \right)\ dt = 1 - O( \log^{-100} P )$$
and so it suffices to show that
$$ \sum_{0 <|k| \leq \frac{\log^{1/500} P}{\max\{a, b\}}} \frac{1}{4\pi^2 k^2 ab} = \frac{1}{12ab} + O( \log^{-1/1000} P )$$
This is trivial for $ab \geq \log^{1/1000} P$.  For $ab \leq \log^{1/1000} P$ the claim follows from the Basel identity
$$ \sum_{k=1}^\infty \frac{1}{k^2} = \frac{\pi^2}{6}$$
and the tail bound
$$ \sum_{k \geq \log^{1/1000} P} \frac{1}{k^2} \ll \log^{-1/1000} P.$$
\end{proof}

Now we can get back to proving Proposition~\ref{distance} assuming Proposition~\ref{equid}. From Proposition \ref{covar}(i) and \eqref{jp} we see that
\begin{equation}\label{c1}
 c_1( N, m) + c_1(N, n-m) - c_1(N, n) = c_1( N, m') + c_1(N, n'-m') - c_1(N, n') + O( \delta)
\end{equation}
for $N \in \{m,n,n-m,m',n',m'-n'\}$, where for brevity we introduce the error tolerance
$$ \delta \coloneqq \log^{-1/1000} P.$$
We can now arrive at the desired contradiction by some case analysis (reminiscent of that in \cite{sound-preprint, sound-ratio}) using the remaining portions of Proposition \ref{covar}, as follows.

\subsection*{Case $m' \approx 0$}  Applying \eqref{c1} with $N = m$, we conclude from Proposition \ref{covar}(ii) that
\begin{equation}
\label{eq:covN=m} 
c_1(m,m) + c_1(m,n-m) - c_1(m,n) = c_1(m,n'-m') - c_1(m,n') + O(\delta).
\end{equation}
From Lemma \ref{scale}(iii) we have $m \not \approx 0$ (and hence also $n-m, n'-m',n' \not \approx 0$, since these quantities are greater than or equal to $m$), hence by Proposition \ref{covar}(iii) we have $c_1(m,m) = \frac{1}{12} + O(\delta)$. Furthermore, since $m' \approx 0$, we see from Lemma~\ref{scale}(ii) that, for $1 \leq a, b \leq \log^{1/100} P$, $am \approx b(n'-m')$ if and only if $am \approx bn'$. Hence Proposition~\ref{covar}(iii) (iv) implies that 
\[
c_1(m, n'-m') = c_1(m, n') + O(\delta).
\] 
Plugging these facts into~\eqref{eq:covN=m} and rearranging, we obtain
\[
\frac{1}{12} + c_1(m,n-m) = c_1(m,n) + O(\delta).
\]
But by Proposition~\ref{covar}(iii), (iv) we know that $c_1(m, n-m) \geq -O(\delta)$, so that 
\[
c_1(m, n) \geq \frac{1}{12} + O(\delta)
\]
But since $m \not \approx n$ (because $m \leq n/2$ and $m \not \approx 0$), another application of Proposition~\ref{covar}(iii), (iv) 
gives
\[
c_1(m, n) \leq \frac{1}{2}\frac{1}{12} + O(\delta),
\]
which is a contradiction.

Since $m'$ was the smallest element of $\{m,n,n-m,m',n',m'-n'\}$, we now thus have $N \not \approx 0$ for all $N \in \{m,n,n-m,m',n',m'-n'\}$, and case (ii) of Proposition \ref{covar} no longer applies.

\subsection*{Case $m \not \approx m'$ and $m' \not \approx 0$}  We apply \eqref{c1} with $N = m'$ to conclude that
\begin{equation}
\label{eq:mnotapm'1st}
c_1(m',m) + c_1(m',n-m) - c_1(m',n) = c_1(m', m') + c_1(m',n'-m') - c_1(m',n') + O( \delta ).
\end{equation}
Now if there are no co-prime positive integers $a, b \leq \log^{1/100} P$ such that $am' \approx bn'$ or $am' \approx b(n'-m')$, then by Proposition~\ref{covar}(iv) we have 
\[
c_1(m',n'-m') - c_1(m',n') = O(\delta)
\]
On the other hand, if such co-prime integers exist, then $am' \approx bn'$ if and only if $(a-b)m' \approx b(n'-m')$ and necessarily $a > b$, so that by Proposition \ref{covar}(iii) we have in this case
\begin{equation}
 c_1(m',n'-m') - c_1(m',n') = \frac{1}{12 (a-b) b} - \frac{1}{12 ab} - O(\delta) \geq -O(\delta).
\end{equation}

Since Proposition~\ref{covar}(iii) also gives $c_1(m', m') \geq 1/12 + O(\delta)$, combining with~\eqref{eq:mnotapm'1st} we obtain that
\begin{equation}\label{c11}
c_1(m',m) + c_1(m',n-m) - c_1(m',n) \geq \frac{1}{12} - O( \delta ).
\end{equation}
On the other hand, since $m' \not \approx m$, we also have $m' \not \approx n-m$ since $n-m \geq m > m'$.  By Proposition \ref{covar}(iii), (iv), we have
$$ c_1(m',m) + c_1(m',n-m) \leq \frac{1}{12} \cdot \frac{1}{2} + \frac{1}{12} \cdot \frac{1}{2} + O(\delta),$$
which can be improved to
\begin{equation}
\label{eq:mnappm'imp}
c_1(m',m) + c_1(m',n-m) \leq \frac{1}{12} \cdot \frac{1}{3} + \frac{1}{12} \cdot \frac{1}{2} + O(\delta),
\end{equation}
unless both $m \approx 2m'$ and $n-m \approx 2m'$. Since by Proposition \ref{covar}(iii), (iv) we have
$c_1(m',n) \geq - O( \delta )$, the estimate~\eqref{eq:mnappm'imp} contradicts \eqref{c11}. 

Hence we must have both $m \approx 2m'$ and $n-m \approx 2m'$.  But then Lemma \ref{scale}(ii) forces $n \approx 4m'$, hence by Proposition \ref{covar}(iii)
$$ c_1(m',m) + c_1(m',n-m) - c_1(m',n) = \frac{1}{12}\cdot \frac{1}{2}+ \frac{1}{12} \cdot\frac{1}{2} - \frac{1}{12} \cdot\frac{1}{4} + O(\delta),$$
and we again contradict \eqref{c11}. 

\subsection*{Case $m \approx m'$ and $m' \not \approx 0$} By Lemma \ref{scale}(iii), we must have $n \not \approx n'$.  We apply \eqref{c1} for $N=n$ to obtain
\begin{equation}\label{can}
 c_1( n, m) + c_1(n, n-m) - c_1(n, n) = c_1( n, m') + c_1(n, n'-m') - c_1(n, n') + O( \delta).
\end{equation}
Since $m \approx m'$, we have by Proposition~\ref{covar}(iii), (iv) (using also Lemma~\ref{scale}(ii)) that $c_1(n, m) = c_1(n, m') + O(\delta)$. Proposition~\ref{covar}(iii) also gives $c_1(n, n) = 1/12 + O(\delta)$. Plugging these into~\eqref{can} and rearranging, we obtain
\begin{equation}
\label{simplmappm'}
c_1(n, n-m) + c_1(n, n') = \frac{1}{12} + c_1(n, n'-m') + O( \delta).
\end{equation}
Since $n \not \approx n'$ and $m \not \approx 0$, we see from Proposition \ref{covar}(iii), (iv) that 
\begin{equation}
\label{simplmappm'-2}
c_1(n, n-m) + c_1(n, n') \leq \frac{1}{12}\cdot \frac{1}{2}+\frac{1}{12}\cdot\frac{1}{2} + O(\delta)
\end{equation}
which can be improved to 
\begin{equation}
\label{simplmappm'-3}
c_1(n, n-m) + c_1(n, n') \leq \frac{1}{12}\cdot\frac{1}{3}+\frac{1}{12}\cdot\frac{1}{2} + O(\delta)
\end{equation}
unless $2(n-m) \approx n$ and $n' \approx 2n$. Now~\eqref{simplmappm'-3} contradicts~\eqref{simplmappm'} since by Proposition \ref{covar}(iii), (iv) $c_1(n, n'-m') \geq - O(\delta)$. 

Hence we can assume that $2(n-m) \approx n$ and $n' \approx 2n$. But using $m \approx m'$ and Lemma~\ref{scale}(ii) this implies that $2(n'-m') \approx 3n$, so that by~\eqref{simplmappm'} and Proposition~\ref{covar}(iii) we obtain
\[
c_1(n, n-m) + c_1(n, n') = \frac{1}{12} + c_1(n, n'-m') + O( \delta) = \frac{1}{12} + \frac{1}{12} \cdot \frac{1}{2\cdot 3} + O( \delta).
\]
contradicting~\eqref{simplmappm'-2}.

\begin{remark} Morally speaking, the ability to obtain a contradiction here reflects the fact that one cannot have an identity of the form
\begin{equation}\label{mx}
 \{ mx \} + \{ (n-m) x \} - \{ nx\} = \{ m'x\} + \{ (n'-m')x\} - \{n' x\}
\end{equation}
for almost all real numbers $x$ and some integers $1 \leq m \leq n/2$, $1 \leq m' \leq n'/2$ unless one has both $m=m'$ and $n=n'$ (this type of connection goes back to Landau \cite[p. 116]{landau-1}).  This latter fact is easily established by inspecting the jump discontinuities of both sides of \eqref{mx}, but it is also possible to establish it by computing the covariances of both sides of \eqref{mx} with $\{ Nx\}$ for various choices of $N$, and the arguments above can be viewed as an adaptation of this latter method.
\end{remark}

It remains to establish Proposition \ref{equid}.  This will be established in the next section.
\section{Equidistribution}\label{equid-sec}

In this section we prove Proposition \ref{equid}.  Fix $\eps,A$.  We may assume that $P$ is sufficiently large depending on $\eps,A$, as the claim is trivial otherwise.  If we have $P^j \geq M \log^A P$ then we can replace in both parts of the proposition $\frac{M}{P^j}$ by $0$ with negligible error, so we may assume that either $M=0$ or $P^j < M \log^A P$.  In either event we may thus assume that $j \leq \log^{1/2} P$.  Next, by partitioning $I$ into at most $\log^{100} P$ intervals of length at most $P \log^{-100} P$ and using the triangle inequality, it suffices (after suitable adjustment of $P$, $A$) to assume that $I \subset [P, P + P \log^{-100} P]$.  In particular we have
\begin{equation}\label{pj}
 P^{j-1} \leq t^{j-1} \leq 2 P^{j-1}
\end{equation}
for all $t \in I$.

Let us first reduce Proposition~\ref{equid}(ii) to Proposition \ref{equid}(i). We perform a Fourier expansion
$$ W(x,y) = \sum_{n,m \in \Z} c_{n,m} e(nx + my)$$
where by integration by parts the Fourier coefficients
$$ c_{n,m} = \int_{\R^2/\Z^2} W(x,y) e(-nx-my)\ dx dy$$
obey the bounds
$$ |c_{n,m}| \ll \|W\|_{C^3} (1 + |n| + |m|)^{-3}.$$
By the triangle inequality, the contributions of those frequencies $n,m$ with $|n| + |m| \geq \log^{2A} P$ is then acceptable.  By a further application of the triangle inequality, Proposition~\ref{equid}(ii) follows from showing that
$$ 
\sum_{p \in I} e\left( n \frac{N}{p} + m \frac{M}{p^j} \right) = \int_I e\left( n \frac{N}{t} + m \frac{M}{t^{j}}\right)\ \frac{dt}{\log t} + O_{\eps,A}( P\log^{-10A} P )$$
whenever $n,m$ are integers with $|n| + |m| \leq \log^{2A} P$. But this follows from Proposition \ref{equid}(i) by adjusting the values of $\varepsilon, A, M, N$ suitably.

The proof of part (i) will use the standard tools of Vaughan's identity and Vinogradov's exponential sum estimates.  We state a suitable form of the latter tool here:

\begin{lemma}[Vinogradov's exponential sum estimate]\label{le_vin_expsum} Let $X\geq 2$, $F \geq X^4$, and $\alpha \geq 1$. Let $I\subset [X,2X]$ be an interval. Let $f(x)$ be a smooth function on $I$ satisfying for all $t\in I$ 
\begin{align}\label{vin24}
 \alpha^{-r^3}F\leq \frac{t^r}{r!}|f^{(r)}(t)|\leq\alpha^{r^3}F   
\end{align}
for all integers $1\leq r\leq 10\lceil \log F/(\log X)\rceil+1.$
Assume further that
\begin{align}\label{vin23}
(\log \alpha)\frac{(\log F)^2}{(\log X)^3}<10^{-3}.    
\end{align}
Then we have
\begin{align}\label{vin22}
\sum_{n\in I}e(f(n))\ll \alpha X\exp(-2^{-18}(\log X)^3/(\log F)^2),
\end{align}
where the implied constant is absolute.
\end{lemma}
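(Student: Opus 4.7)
The plan is to deduce this via the classical Vinogradov method for exponential sums with smooth phases. The shape of the saving $\exp(-c(\log X)^3/(\log F)^2)$ in \eqref{vin22} is the signature output of Vinogradov's iterated differencing technique, and the hypotheses \eqref{vin24} and \eqref{vin23} are precisely those traditionally imposed in this context. I would follow the Taylor approximation route: partition $I$ into subintervals on each of which $f$ is approximated by a polynomial of degree $k \asymp \log F/\log X$, then apply Vinogradov's bound for polynomial exponential sums.

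More concretely, set $k = 10\lceil \log F/\log X\rceil$, pick a subinterval length $Y \leq X$ (to be calibrated), and partition $I$ into $O(X/Y)$ subintervals $I_j$ of length $\leq Y$. On each $I_j$, Taylor expand $f$ about a reference point $x_j \in I_j$ to degree $k$, obtaining $f(x) = P_j(x) + R_j(x)$ with $P_j$ a polynomial of degree $k$. Applying \eqref{vin24} at $r = k+1$ (which is why control up to this index is required in the hypothesis) bounds the remainder by $|R_j(x)| \leq \alpha^{(k+1)^3} F (Y/X)^{k+1}$, and choosing $Y$ so that $Y/X$ sits slightly below $X^{-1/k}$ makes this $o(1)$ under \eqref{vin23}; this allows $e(f(n))$ to be replaced by $e(P_j(n))$ on each subinterval with negligible error.

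The coefficients $a_r = f^{(r)}(x_j)/r!$ of $P_j$ then satisfy two-sided bounds of the form $\alpha^{-r^3}F/x_j^r \leq |a_r| \leq \alpha^{r^3}F/x_j^r$ from \eqref{vin24}; the lower bounds are essential, as they prevent leading coefficients of $P_j$ from being exceptionally small and thereby guarantee that Vinogradov's method produces a nontrivial saving. I would then apply the classical Vinogradov bound on sums $\sum_{n \in I_j} e(P_j(n))$ for such normalized polynomials (whose proof rests on the Vinogradov mean value theorem, now known sharply thanks to Bourgain--Demeter--Guth and Wooley), yielding a per-subinterval bound of $Y\exp(-c(\log Y)^3/(\log F)^2)$ for some absolute $c$. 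Summing over the $O(X/Y)$ subintervals and absorbing the cumulative $\alpha^{r^3}$ losses into a single factor of $\alpha$ via \eqref{vin23} produces the stated bound.

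\textbf{Main obstacle.} The delicate point is the quantitative calibration of the three interacting parameters $k$, $Y$, and the absolute constant inside the exponential. One must simultaneously ensure that (i) the Taylor remainder is genuinely $o(1)$ after accounting for the $\alpha^{(k+1)^3}$ blow-up, (ii) the Vinogradov polynomial estimate applied to each $P_j$ delivers the specific saving $(\log X)^3/(\log F)^2$ with an absolute constant at least $2^{-18}$, and (iii) the cumulative $\alpha$-losses collapse to a single factor of $\alpha$ under \eqref{vin23}. Each of these ingredients is standard in isolation, but combining them to produce the precise numerical statement of \eqref{vin22} demands careful bookkeeping, and this is where the bulk of the work lies.
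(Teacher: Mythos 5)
Your proposal diverges from the paper's argument in a way that leaves a real gap. The paper proves the lemma by adapting the proof of \cite[Theorem 8.25]{ik}: one shifts $n \mapsto n+q$ with $q = xy$, $1 \le x,y \le V = X^{1/4}$, replaces $f(n+q)$ by the Taylor polynomial $F_n(q) = \sum_{0\le r\le R}\frac{f^{(r)}(n)}{r!}q^r$ of degree $R = 10\lceil \log F/\log X\rceil$ (the $(R+1)$-st derivative bound in \eqref{vin24} controls the truncation error, which is why the hypothesis runs up to $r=R+1$), and then the resulting bilinear sum $\sum_{q=xy,\ x,y\le V} e(F_n(q))$, still averaged over \emph{every} $n \in I$, is estimated exactly as in \cite[pp.~217--225]{ik} via H\"older and the Vinogradov mean value theorem; the only modification needed is the observation that the argument there uses \eqref{vin24} only for $r\le R+1$. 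Your plan instead partitions $I$ into intervals $I_j$ of length $Y$, freezes one Taylor polynomial $P_j$ per interval, and asserts a uniform per-interval bound $\sum_{n\in I_j}e(P_j(n))\ll Y\exp(-c(\log Y)^3/(\log F)^2)$ using only the two-sided size bounds $|a_r|\asymp_\alpha F/X^r$ on the coefficients. That step is the gap: no such black-box estimate exists, and the asserted uniform bound is false. Weyl--Vinogradov bounds for a single polynomial sum require Diophantine (rational-approximation) information about the coefficients, not merely their size; with only size constraints, every coefficient that is relevant on a given subinterval can lie within $O(Y^{-r})$ of an integer (or of a rational with bounded denominator), in which case $e(P_j(n))$ has no cancellation on $I_j$ and the subinterval sum is $\gg Y$. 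A handful of degenerate subintervals would be harmless, but your argument offers no mechanism to show that all but an $\exp(-c(\log X)^3/(\log F)^2)$ proportion of them are non-degenerate; establishing exactly that is the content of Vinogradov's method, and it is what the paper (following Iwaniec--Kowalski) achieves by retaining the average over all base points $n$ together with the multiplicative structure $q=xy$ of the shifts, so that the mean value theorem is applied to the varying coefficient vectors $(f^{(r)}(n)/r!)_{r\le R}$ rather than to a single frozen polynomial.

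A secondary, more mechanical problem: with $Y/X$ ``slightly below $X^{-1/k}$'' the truncation error per point is about $\alpha^{(k+1)^3}F(Y/X)^{k+1}\approx \alpha^{(k+1)^3}F/X$, which for $F\ge X^4$ is at least $X^3$ and nowhere near $o(1)$; you would need $Y/X$ below roughly $F^{-1/(k+1)}$ (with room for the $\alpha^{(k+1)^3}$ loss, which \eqref{vin23} does make affordable). This is fixable, but it shows the calibration as written does not close, independently of the main issue above.
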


\begin{proof} This is essentially \cite[Theorem 8.25]{ik} with minor modifications (the modification needed is that we only assume \eqref{vin24} for $r$ in a certain range, not all integers $r\geq 1$.).

Let $R:=10\lceil \log F/(\log X)\rceil$, and as in \cite[p. 217]{ik}, let
\begin{align*}
F_n(q):=\sum_{0\leq r\leq R}\alpha_r(n)q^r,\quad \alpha_r(n) \coloneqq \frac{f^{(r)}(n)}{r!}.    
\end{align*}
 Let $S_f(I)$ denote the sum in \eqref{vin22}. By Taylor's formula, for any $q\geq 1$ we have
\begin{align*}
S_f(I)=\sum_{n\in I}e(F_n(q))+O\left(q+Xq^{R+1}\frac{\max_{t\in I}|f^{(R+1)}(t)|}{(R+1)!}\right).    
\end{align*}
Let $\mathcal{Q}:=\{xy:1\leq x\leq V,1\leq y\leq V\}\cap \mathbb{N}$, where $\mathcal{Q}$ is interpreted as a multiset. Also let $Q=|\mathcal{Q}|=V^2$. Then
\begin{align*}
 S_f(I)=\sum_{n\in I}|\mathcal{Q}|^{-1}\sum_{q\in \mathcal{Q}}e(F_n(q))+O\left(Q+XQ^{R+1}\frac{\max_{t\in I}|f^{(R+1)}(t)|}{(R+1)!}\right).   
\end{align*}
We take $V=X^{1/4}$ in which case by \eqref{vin24} the error term is 
\begin{equation}
\label{eq:TaylorError}
\begin{split}
&\ll X^{1/2}+X \cdot X^{(R+1)/2}\alpha^{(R+1)^3}F/X^{R+1}\\
&\ll X^{1/2}+X^{-(R+1)/4}\alpha^{(R+1)^3} \cdot (F X^{1-(R+1)/4}).
\end{split}
\end{equation}
The term in the parenthesis is $\leq FX^{3/4} F^{-10/4} \leq 1$. Using also~\eqref{vin23} we see that~\eqref{eq:TaylorError} is $\ll X^{1/2}$ which is in particular smaller than the right-hand side of \eqref{vin22}. The sum $\sum_{q\in \mathcal{Q}}e(F_n(q))$ is precisely the one estimated in \cite[pp. 217--225]{ik}. The only assumption needed of $f$ in that argument is \eqref{vin24}, and the only restriction on $F$ and $X$ there is $F\geq X^4$. Hence, we conclude that the lemma holds by following the analysis there verbatim.
\end{proof}

We now apply this estimate to obtain an estimate for an exponential sum over integers.

\begin{proposition}[Exponential sums over integers]\label{expi}  Let $\eps > 0$, $A \geq 1$, $X \geq 2$, $2 \leq j \ll \log^{1/2} X$, and let $N,M$ be real numbers with $N,M \ll \exp( O( \log^{3/2-\eps} X))$.  Let $I$ be an interval in $[X,X+X\log^{-100} X]$.  Then
\begin{equation}\label{integer-sum}
\sum_{n \in I} e\left( \frac{N}{n} + \frac{M}{n^j} \right) \ll_{\eps,A} X (1+F)^{-c} \log^{O(A)} X + X \log^{-A} X 
\end{equation}
for some absolute constant $c>0$, where
$$ F \coloneqq \frac{|N|}{X} + \frac{|M|}{X^j}.$$

\end{proposition}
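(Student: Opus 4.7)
The plan is to apply Vinogradov's exponential sum bound (Lemma~\ref{le_vin_expsum}) to the phase $f(t) = N/t + M/t^j$, after carefully handling possible cancellation between the two terms. Direct computation gives
$$\frac{t^r}{r!}f^{(r)}(t) = (-1)^r\left(\frac{N}{t} + \binom{j+r-1}{r}\frac{M}{t^j}\right),$$
which has size $\asymp F$ provided the two summands do not cancel. When $N$ and $M$ have opposite signs, the right-hand side vanishes at the unique point $t_r$ defined by $t_r^{j-1} = -\binom{j+r-1}{r}M/N$, and we must excise these bad points.

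First I would dispose of two easy regimes. If $F \leq \log^{CA} X$ for a sufficiently large constant $C$ (depending on $c$), then the right-hand side of the claimed bound already exceeds $|I| \leq X$, so the estimate is trivial. At the opposite extreme, if $F \geq X^{5}$ then $|f'(t)| \asymp F/X$ outside of a single short interval where $f'$ vanishes, and a standard first derivative test / Poisson summation argument yields $\sum_{n\in I} e(f(n)) \ll X/F \ll X(1+F)^{-1}$, stronger than needed.

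The main case is $\log^{CA} X \leq F \leq X^{5}$. Set $R := 10\lceil \log F/\log X\rceil + 1$, so $R \leq \log^{1/2-\eps} X$ by the growth restrictions on $N,M$. For each $r$ with $1 \leq r \leq R$ I would remove from $I$ the interval $(t_r - \eta X, t_r + \eta X)$, where $\eta \coloneqq \log^{-K_0 A} X$ for some large $K_0$; the total contribution of these removed pieces to the sum is trivially $\ll R\eta X \ll X\log^{-A} X$, absorbed by the final error term. On the complement, I would verify the sandwich estimate
$$\log^{-K_1 A} X \cdot F \;\leq\; \tfrac{t^r}{r!}\bigl|f^{(r)}(t)\bigr| \;\leq\; \log^{K_1 A} X \cdot F$$
for all $1 \leq r \leq R$: the upper bound uses $\binom{j+r-1}{r} \leq (j+r)^r$ with $j,R \ll \log^{1/2} X$, while the lower bound comes from Taylor-expanding $N/t+\binom{j+r-1}{r}M/t^j$ around $t_r$, where its derivative has size $(j-1)|N|/t_r^2 \asymp F/X$ (via the defining relation of $t_r$), so the phase grows linearly with slope of order $F/X$ and remains $\gtrsim \eta F$ off the excised intervals.

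Lemma~\ref{le_vin_expsum} then applies on each of the $O(R)$ surviving sub-intervals with $\alpha = \log^{K_1 A} X$ and $\tilde F \asymp F$; hypothesis~\eqref{vin23} reduces to $K_1 A \log_2 X \cdot \log^{3-2\eps} X / \log^3 X \to 0$, which is satisfied for $X$ large. (When $F < X^4$ so that the condition $\tilde F \geq X^4$ of the lemma fails, the first derivative argument from the previous paragraph already delivers a bound strictly stronger than $X(1+F)^{-c}\log^{O(A)} X$.) Each surviving sub-interval contributes $\ll \alpha X \exp(-c(\log X)^3/(\log F)^2) \ll X \log^{O(A)} X \cdot F^{-c'}$ for some small $c'>0$, since $(\log X)^3/(\log F)^2 \geq (c'/c)\log F$ throughout our range $\log F \leq 5\log X$. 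Summing over the $O(R)$ sub-intervals yields the claimed bound. The principal obstacle is the uniform derivative control after excision: one must simultaneously handle all $R$ orders of cancellation, verifying that the slopes $(j-1)\binom{j+r-1}{r}|M|/t_r^{j+1} = (j-1)|N|/t_r^2$ really are comparable to $F/X$ independently of $r$, so that a single short excision at each $t_r$ restores the lower bound for every derivative simultaneously.
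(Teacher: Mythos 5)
Your setup (computing $\tfrac{t^r}{r!}f^{(r)}(t) = \pm\bigl(\tfrac{N}{t} + \binom{j+r-1}{r}\tfrac{M}{t^j}\bigr)$, excising short intervals where cancellation between the two terms can occur, and then invoking Lemma~\ref{le_vin_expsum}) matches the paper's argument, and the excision step is essentially sound: the factor $\binom{j+r-1}{r} \le (j+r)^r = \exp(O(r^2\log\log X))$ ruins your claimed uniform $\log^{\pm K_1A}X$ sandwich for large $r$, but this is harmless because \eqref{vin24} tolerates the $r$-dependent factor $\alpha^{r^3}$, which is exactly how the paper absorbs it. The genuine gap is in your case division. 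You cap the Vinogradov application at $F \le X^5$ and propose to handle $F \ge X^5$ by a ``first derivative test / Poisson summation'' giving $\ll X/F$. This cannot work: in that regime $|f'| \asymp F/X \ge X^4$ is enormous, so Kusmin--Landau does not apply (you have no control on the distance of $f'$ to integers), and the $B$-process/stationary phase produces a dual sum of length $\asymp |f'(I)|$ with terms of size $|f''|^{-1/2} \asymp X F^{-1/2}$, yielding at best a bound of order $F^{1/2}$-type quantities, which is trivial for $F \ge X^2$ --- certainly nothing remotely like $X/F < 1$, which would amount to essentially perfect cancellation. The superpolynomial range of $F$ (up to $\exp(O(\log^{3/2-\eps}X))$) is precisely the hard case that forces the restriction on $N,M$ in the proposition, and it is exactly where the paper applies Lemma~\ref{le_vin_expsum} (for all $F \ge X^4$, with no upper cap), accepting that the saving there is only $\exp(-c(\log X)^3/(\log F)^2) \le \exp(-c\log^{2\eps}X)$, i.e.\ the $X\log^{-A}X$ term of the conclusion rather than an $F^{-c}$ power saving.

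The same invalid first-derivative argument is also what you fall back on for $\log^{CA}X \le F < X^4$, where Lemma~\ref{le_vin_expsum} is inapplicable; but there too the second derivative test is trivial once $F \ge X^2$, so some genuinely higher-order input is needed. The paper closes this range with the Weyl inequality \cite[Theorem 8.4]{ik} with $k=5$, which gives $\ll \alpha^{O(1)}(F/X^5 + 1/F)^c X\log X$ and hence the $X(1+F)^{-c}\log^{O(A)}X$ term of the conclusion. So your proof needs two repairs: replace the $F \ge X^5$ treatment by the Vinogradov lemma applied for all $F \ge X^4$ (aiming only for the $X\log^{-A}X$ bound there), and replace the sub-$X^4$ fallback by a Weyl-type estimate to produce the $F^{-c}$ saving.
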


\begin{proof} We may assume without loss of generality that $A$ is sufficiently large, and $X$ is sufficiently large depending on $\eps,A$.  By hypothesis we have $F \ll \exp( O( \log^{3/2-\eps} X))$.
We may assume that $F \geq \log^{CA} X$ for a large absolute constant $C$, since the claim is trivial otherwise.  

Let $f \colon I \to \R$ denote the phase function
$$ f(t) \coloneqq \frac{N}{t} + \frac{M}{t^j}.$$
Then for any $r \geq 1$ and $t \in I$ we have
\begin{equation}
\label{eq:expint1} 
\frac{t^r}{r!}|f^{(r)}(t)| = \left| \frac{N}{t} + \frac{M_r}{t^j} \right| \asymp X^{-1} |N + M_r / t^{j-1}|
\end{equation}
where
$$ M_r \coloneqq \binom{r+j-1}{j-1} M.$$
Since
$$ 1 \leq \binom{r+j-1}{j-1} \leq (r+j)^r = \exp(r\log(r+j)) = \exp(O( r^2 \log_2 X ) ) $$
we conclude that
$$ M_r = \exp( O( r^2 \log_2 X ) ) M$$
and
$$ \frac{|N|}{X} + \frac{|M_r|}{X^j} = \exp( O( r^2 \log_2 X ) ) F.$$

If $|M_r| \leq |N| X^{j-1}/4$ then from the triangle inequality and \eqref{pj} we have 
$$
X^{-1} |N + M_r / t^{j-1}| \asymp X^{-1} |N| = \exp( O( r^2 \log_2 X ) ) F.
$$
Consider then the case $|M_r| > |N| X^{j-1}/4$. We have the upper bound
$$ X^{-1} |N + M_r / t^{j-1}| \ll \frac{|M_r|}{X^j} \ll \exp( O( r^2 \log_2 X ) ) F $$
for all $t \in I$ from the triangle inequality. Furthermore, since the function $t \mapsto -1/t^{j-1}$ has derivative $\asymp j/X^j$ on $I$, we also have, for all $t$ outside of an interval of length $O( X \log^{-2A} X)$, the lower bound
$$ X^{-1} |N + M_r / t^{j-1}| \gg \frac{|M_r|}{X^j} \log^{-3A} X \gg \exp( O( r^2 \log_2 X ) ) F \log^{-3A} X.$$
 If we set $\alpha \coloneqq \log^{4A} X$ and $A$ is sufficiently large, then we conclude from~\eqref{eq:expint1} and the bounds above that the estimate \eqref{vin24} holds for all $1 \leq r \leq \log X$ and all $t \in I$ outside the union of $O(\log X)$ intervals of length $O( X \log^{-2A} X)$.  The contribution of these exceptional intervals to \eqref{integer-sum} is negligible, and removing them splits $I$ up into at most $O(\log X)$ subintervals, so by the triangle inequality it suffices to show that
$$
\sum_{n \in I'} e\left( \frac{N}{n} + \frac{M}{n^j} \right) \ll_{\eps,A} X \log^{-2A} X $$
for any subinterval $I'$ with the property that \eqref{vin24} holds for all $t \in I'$ and $1 \leq r \leq \log X$.  If $F \geq X^4$, we may apply Lemma \ref{le_vin_expsum} to conclude that
$$
\sum_{n \in I'} e\left( \frac{N}{n} + \frac{M}{n^j} \right) \ll X \log^{4A} X \exp( - c \log^{2\eps} X )$$
for some absolute constant $c>0$, and the claim follows.  If instead $F < X^4$, we can apply the Weyl inequality \cite[Theorem 8.4]{ik} with $k=5$ to conclude that
$$ \sum_{n \in I'} e\left( \frac{N}{n} + \frac{M}{n^j} \right) \ll \alpha^{O(1)} ( F/X^5 + 1/F )^c X \log X$$
for some absolute constant $c>0$; since $F \geq \log^{C A} X$, we obtain the claim by taking $C$ large enough.
\end{proof}

Now we prove Proposition~\ref{equid}(i).  We may assume without loss of generality that $j \geq 2$, since for $j=1$ we can absorb the $M$ terms into the $N$ term (and add a dummy term with $M=0$ and $j=2$, say).  By summation by parts (see e.g. \cite[Lemma 2.2]{mrt-div}), and adjusting $A$ as necessary, it suffices to show that
$$ \sum_{p \in I} e\left( \frac{N}{p} + \frac{M}{p^j} \right) \log p = \int_I e\left( \frac{N}{t} + \frac{M}{t^{j}}\right)\ dt + O_{\eps,A}( P\log^{-10A} P )$$
for all intervals $I \subset [P, P + P \log^{-100} P]$.  This is equivalent to
$$ \sum_{n \in I} e\left( \frac{N}{n} + \frac{M}{n^j} \right) \Lambda(n) = \int_I e\left( \frac{N}{t} + \frac{M}{t^{j}}\right)\ dt + O_{\eps,A}( P\log^{-10A} P ),$$
where $\Lambda$ is the von Mangoldt function,
since the contribution of the prime powers is negligible.  We introduce the quantity
$$ F \coloneqq \frac{|N|}{P} + \frac{|M|}{P^j}.$$
If $F \leq \log^{CA} P$
for some large absolute constant $C>0$, then the total variation of the phase $t \mapsto \frac{N}{t} + \frac{M}{t^{j}}$ is $O( \log^{CA} P)$, and the claim readily follows from a further summation by parts (see e.g. \cite[Lemma 2.2]{mrt-div}) and the prime number theorem (with classical error term).  Thus we may assume that
\begin{equation}\label{logcp}
 F > \log^{CA} P.
\end{equation}
In this case, a change of variables $t = P/s$ gives 
$$ \int_I e\left( \frac{N}{t} + \frac{M}{t^{j}}\right)\ dt = -P \int_{P/I} e\left( \frac{N}{P} s + \frac{M}{P^{j}} s^j \right)\ \frac{ds}{s^2}.$$
The derivative of the phase here is $N/P + j s^{j-1} M/P^j $ which, once $C$ is large enough, is $\geq \log^{10 A} P$ for all $s \in P/I$ apart from an interval of length at most $O(\log^{-10 A} P)$. Hence by partial integration we get that 
\[
\int_I e\left( \frac{N}{t} + \frac{M}{t^{j}}\right)\ dt \ll P \log^{-10A} P
\]
if $C$ is large enough, so it remains to establish the bound
$$ \sum_{n \in I} e\left( \frac{N}{n} + \frac{M}{n^j} \right) \Lambda(n) \ll P \log^{-10 A} P$$
under the hypothesis \eqref{logcp}.

By Vaughan's identity in the form of \cite[Proposition 13.4]{ik} (with $y=z=P^{1/3}$), followed by a shorter-than-dyadic decomposition, we can write 
\begin{align*}
\Lambda(n)=\sum_{r\leq R}(\alpha_r*1(n)+\alpha_r'*\log(n)+\beta_r*\gamma_r(n))
\end{align*}
for $n \in [P,2P]$, where $*$ denotes Dirichlet convolution, and
\begin{align*}
R&\ll \log^{O(1)} P,\\
|\alpha_r(n)|, |\alpha'_r(n)|, |\beta_r(n)|, |\gamma_r(n)|&\ll \log P,\\
\supp(\alpha_r), \supp(\alpha'_r) &\subset [M_r,(1 + \log^{-100} P)M_r], \\
\supp(\beta_r) &\subset [K_r,(1 + \log^{-100} P)K_r], \\
\supp(\gamma_r) &\subset [N_r,(1 + \log^{-100} P)N_r],\\
1 \leq M_r &\ll P^{2/3};\\
P^{1/3} \ll K_r, N_r &\ll P^{2/3}
\end{align*}
(the bound for the coefficients arising from Vaughan's identiy is $\ll \log P$ since $1 \ast \Lambda = \log$).  By the triangle inequality, it thus suffices to establish the Type I estimates
\begin{equation}\label{type-i}
\sum_{n \in I} e\left( \frac{N}{n} + \frac{M}{n^j} \right) (\alpha_r * 1)(n) \ll_{\eps,A} P \log^{-11 A} P
\end{equation}
and
\begin{equation}\label{type-i-alt}
\sum_{n \in I} e\left( \frac{N}{n} + \frac{M}{n^j} \right) (\alpha'_r * \log)(n) \ll_{\eps,A} P \log^{-11 A + 1} P
\end{equation}
as well as the Type II estimates
\begin{equation}\label{type-ii}
\sum_{n \in I} e\left( \frac{N}{n} + \frac{M}{n^j} \right) (\beta_r * \gamma_r)(n) \ll_{\eps,A} P \log^{-11 A} P
\end{equation}
for all $1 \leq r \leq R$ and $I \subset [P, P + P \log^{-100} P]$.  The second Type I estimate \eqref{type-i-alt} follows from the first Type I estimate \eqref{type-i} (replacing $\alpha_r$ with $\alpha'_r$) and a summation by parts (see e.g. \cite[Lemma 2.2]{mrt-div}), so it suffices to establish \eqref{type-i} and \eqref{type-ii}.

We begin with \eqref{type-i}.  By the triangle inequality, the left-hand side is bounded by
$$ \ll \log P \sum_{m \in [M_r,(1 + \log^{-100} P) M_r]} \left|\sum_{n \in \frac{1}{m} \cdot I} e\left( \frac{N}{mn} + \frac{M}{m^jn^j} \right)\right|.$$
Applying Proposition \ref{expi} with $X=P/m$ and $N/m$ and $M/m^j$ in place of $N$ and $M$, we can bound this by
$$ \ll_{\eps,A} P \log P \left( (1 + F)^c \log^{O(A)} P + \log^{-20 A} P \right)$$
for some constant $c>0$, and the claim now follows from \eqref{logcp}.

Now we establish \eqref{type-ii}.  We can assume that $K_r N_r \asymp P$, as the sum vanishes otherwise.  By the triangle inequality, the left-hand side is bounded by
$$ \ll \log P \sum_{m \in [K_r, (1 + \log^{-100} P) K_r]} \left|\sum_{n \in \frac{1}{m} \cdot I} \gamma_r(n) e\left( \frac{N}{mn} + \frac{M}{m^jn^j} \right)\right|.$$
By Cauchy--Schwarz it suffices to show that
$$ \sum_{m \in [K_r, (1 + \log^{-100} P) K_r]} \left|\sum_{n \in \frac{1}{m} \cdot I} \gamma_r(n) e\left( \frac{N}{mn} + \frac{M}{m^jn^j} \right)\right|^2 \ll K_r N^2_r \log^{-30 A} P$$
(say).  Rearranging, it suffices to show that
\begin{equation}
\label{eq:equidfinclaim} \sum_{n,n' \in [N_r, (1 + \log^{-100} P) N_r]} \gamma_r(n) \overline{\gamma_r(n')} X_{n,n'} \ll K_r N^2_r \log^{-30 A} P
\end{equation}
where
$$ X_{n,n'} \coloneqq \sum_{m \in [K_r,(1 + \log^{-100} P) K_r] \cap \frac{1}{n} \cdot I \cap \frac{1}{n'} \cdot I} e\left( \frac{N(n'-n)}{nn'm} + \frac{M((n')^j-n^j)}{n^j (n')^j m^j} \right).$$
By Proposition \ref{expi}, we have
$$ X_{n,n'} \ll_{\eps,A} K_r \left( \left(1 + \frac{|n'-n|}{N_r} F\right)^{-c} \log^{O(A)} P + \log^{-40A} P \right)$$
for some absolute constant $0 < c < 1$.  Bounding $\gamma_r(n) \overline{\gamma_r(n')} \ll \log^2 P$ and noting that
$$ \sum_{n \in [N_r, (1 + \log^{-100} P) N_r]} \left(1 + \frac{|n'-n|}{N_r} F\right)^{-c}
\ll N_r F^{-c}
$$
for all $n' \in [N_r, (1 + \log^{-100} P) N_r]$, we obtain the claim~\eqref{eq:equidfinclaim} from
\eqref{logcp}.  This completes the proof of Proposition \ref{equid}.
\section{Multiplicity of the falling factorial}\label{falling-sec}

In this section we establish Theorem \ref{main-falling}.  We first observe that if $1 \leq m \leq n$ solves \eqref{falling} for some sufficiently large $t$, then
$$ t = (n)_m \geq (m)_m = m! \gg (m/e)^m $$
by Stirling's formula. Hence we have an analogue of \eqref{m-bound}:
\begin{equation}\label{m-bound-falling}
 m \ll \frac{\log t}{\log_2 t}
\end{equation}
Next, since
$$ (n-m)^m < (n)_m \leq n^m$$
we have
\begin{equation}\label{tam}
 t^{1/m} \leq n < t^{1/m} + m
\end{equation}
and  we obtain an analogue 
\begin{equation}\label{n-form-falling}
 n \asymp t^{1/m} = \exp\left( \frac{\log t}{m} \right)
\end{equation}
of \eqref{n-form}, \eqref{n-form-alt}.

Next, we obtain the following analogue of Proposition \ref{distance}.

\begin{proposition}[Distance estimate]\label{distance-falling}  Suppose we have two solutions $(n,m), (n',m')$ to \eqref{falling} in region $\{ (m,n) \in \N^2: 1 \leq m \leq n \}$. Then one has
\begin{equation}\label{malt}
 m' - m \ll \log(n+n').
\end{equation}
Furthermore, if
\begin{equation}\label{mam}
 \exp( \log^{2/3+\eps}(n+n') ) \leq m,m' \leq (n+n')^{2/3}
\end{equation}
for some $\eps>0$, then we additionally have
\begin{equation}\label{nan}
 n' - n \ll_{A,\eps} \frac{m+m'}{\log^A(m+m')}
\end{equation}
for any $A>0$.
\end{proposition}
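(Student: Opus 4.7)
The proof plan parallels that of Proposition \ref{distance} in Section \ref{distance-sec}, with the binomial identity \eqref{np} replaced by the corresponding identity for the falling factorial. Using $(n)_m = n!/(n-m)!$ and the Legendre formula \eqref{legendre}, one obtains
\[
v_p((n)_m) = \frac{m}{p-1} - \sum_{j\geq 1}\left(\left\{\frac{n}{p^j}\right\} - \left\{\frac{n-m}{p^j}\right\}\right),
\]
so that equating $v_p((n)_m) = v_p((n')_{m'})$ for the two solutions yields, for every prime $p$, the key identity
\[
\frac{m-m'}{p-1} = \sum_{j\geq 1}\left(\left\{\frac{n}{p^j}\right\} - \left\{\frac{n-m}{p^j}\right\} - \left\{\frac{n'}{p^j}\right\} + \left\{\frac{n'-m'}{p^j}\right\}\right).
\]

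The first bound \eqref{malt} follows by specializing this identity to any fixed small prime, say $p=2$. Only the finitely many terms with $2^j \leq n+n'$ can contribute non-trivially from fractional-part fluctuations (for larger $j$ the four summands collapse to the telescoping tail $(m-m')/2^j$), giving $O(\log(n+n'))$ terms each of size $O(1)$, and a bounded geometric contribution from the tail. Rearranging yields $(m-m')/(p-1) \ll \log(n+n')/\log p$, and hence $|m-m'| \ll \log(n+n')$.

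For the substantive bound \eqref{nan}, I would run the random-prime argument of Section \ref{distance-sec} on the identity above. Pick a scale $P \asymp \exp(\log^{2/3+\eps/2}(n+n'))$ obeying a four-variable analogue of Lemma \ref{scale} for the quantities $\{n,n-m,n',n'-m'\}$, and arrange additionally that $P \log^{100} P \ll n'-n$ (otherwise \eqref{nan} holds already). The hypothesis $m,m' \leq (n+n')^{2/3}$ keeps all four quantities within the admissible range $\leq \exp(\log^{3/2-\eps/10} P)$ for Proposition \ref{equid}. Drawing $\p$ uniformly from primes in $[P, P+P\log^{-100}P]$ and taking covariances of both sides with $\{N/\p\}$ for $N \in \{n,n-m,n',n'-m'\}$, the left-hand side contributes only the negligible $(m-m')\,\Cov(1/(\p-1), \{N/\p\}) \ll (m-m')/(P\log^{100}P)$, since $1/(\p-1) = 1/P + O(P^{-1}\log^{-100}P)$ is essentially constant over $\p$ and $m-m' \ll \log(n+n')$ by the first bound. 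The right-hand side, analysed via a four-variable analogue of Proposition \ref{covar}, then produces the relation
\[
c_1(N,n) - c_1(N,n-m) - c_1(N,n') + c_1(N,n'-m') = O(\delta), \qquad \delta = \log^{-1/1000} P,
\]
for each $N \in \{n,n-m,n',n'-m'\}$.

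It remains to perform a case analysis on the commensurabilities among $\{n,n-m,n',n'-m'\}$, parallel to the cases treated in Section \ref{distance-sec}. In the generic case where no small-height commensurabilities hold, Proposition \ref{covar}(iv) makes every covariance on the left above equal to $O(\delta)$, so choosing $N = n$ forces $c_1(n,n) = O(\delta)$, contradicting $c_1(n,n) \geq 1/12 - O(\delta)$ from Proposition \ref{covar}(iii). In the commensurable cases, where some $an \approx bn'$ or $a(n-m) \approx b(n'-m')$ hold with small coprime $a,b$, the exceptional $1/(12ab)$ contributions must cancel in pairs; the non-resonance properties of $P$ together with $P\log^{100}P \ll n'-n$ force this cancellation to fail. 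The main obstacle I anticipate is the bookkeeping in this final case analysis: with only four quantities (versus six in Section \ref{distance-sec}) there are fewer relations available to play against one another, but correspondingly the target conclusion \eqref{nan} is weaker than \eqref{meri-2}, pinning down only the single gap $n'-n$ rather than all pairwise differences, so the underlying mechanism should carry through.
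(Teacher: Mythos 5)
Your proof of \eqref{malt} is the same elementary $2$-adic valuation argument as the paper's, so that part matches. For the main bound \eqref{nan}, however, you take a genuinely different route, and it does appear to work. The paper does \emph{not} re-run the covariance machinery of Section \ref{distance-sec}: instead it works at the scale $P \asymp m$, showing (after reducing to $n \asymp n'$, $n,n' \gg m^{3/2}$ and $n' = n + O(m^{-1/2}n)$, see \eqref{stam}) that if $n'-n$ exceeded $m/\log^A m$ then a prime $\p$ drawn from $[2m,100m]$ would with positive probability satisfy the inequalities \eqref{obey}, forcing $\p \mid (n)_m$ but $\p \nmid (n')_{m'}$; Proposition \ref{equid} with smooth weights at scale $m$ (this is where the lower bound in \eqref{mam} enters, so that $n \leq \exp(\log^{3/2-\eps}m)$) plus the frequency separation $\frac{n-m}{m} \gg m^{1/2}\frac{n'-n}{m}+m^{1/2}$ (this is where the upper bound in \eqref{mam} enters) then yields the contradiction. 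Your plan instead adapts the random-prime covariance argument at the much smaller scale $P \asymp \exp(\log^{2/3+\eps/2}(n+n'))$, starting from the correct valuation identity with the $\frac{m-m'}{\p-1}$ term, whose covariance contribution you rightly dismiss as negligible using \eqref{malt}; granting the routine four-variable analogues of Lemma \ref{scale} and Proposition \ref{covar}, your case analysis in fact closes more easily than you fear: after the a priori reduction $n \asymp n'$, the upper bound $m,m' \leq (n+n')^{2/3}$ rules out any small-height commensurability between $n$ and $n-m$ (and between $n'$ and $n'-m'$), so testing the relation with $N=n$ alone gives $c_1(n,n-m)=O(\delta)$, $c_1(n,n)=\frac{1}{12}+O(\delta)$, $c_1(n,n')\leq \frac{1}{24}+O(\delta)$ (as $n \not\approx n'$ under the contradiction hypothesis) and $c_1(n,n'-m')\geq -O(\delta)$, which is already inconsistent. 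Two remarks: your stated reason for the hypothesis $m,m'\leq (n+n')^{2/3}$ (keeping quantities admissible for Proposition \ref{equid}) is not its real role in your argument, since $n,n-m,n',n'-m'\leq n+n'$ are admissible anyway; its role is precisely the non-commensurability just described. And your route, if written out, actually yields the stronger localization $n'-n \ll \exp(O(\log^{2/3+\eps/2}(n+n')))$, which implies \eqref{nan}; the paper's argument is more hands-on but avoids redoing the scale-selection and covariance apparatus for a second family of quantities.
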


\begin{proof}  We begin with \eqref{malt}.  We follow the arguments from \cite[Proof of Theorem 4]{aeh}.  Taking $2$-valuations $v_2$ of both sides of \eqref{falling} and using \eqref{legendre} we have
$$ \sum_{j=1}^\infty \left(\left\lfloor \frac{n}{2^j}\right\rfloor - \left\lfloor \frac{n-m}{2^j}\right\rfloor\right) 
= \sum_{j=1}^\infty \left(\left\lfloor \frac{n'}{2^j}\right\rfloor - \left\lfloor \frac{n'-m'}{2^j}\right\rfloor\right).$$
The summands here vanish unless $j \leq \log(n+n')$.  Writing $\lfloor x \rfloor = x + O(1)$, we conclude that
$$ \sum_{1 \leq j \leq \log(n+n')} \frac{m}{2^j} + O(\log (n+n')) = \sum_{1 \leq j \leq \log(n+n')} \frac{m'}{2^j} + O(\log (n+n')) $$
and \eqref{malt} follows.

Now we prove \eqref{nan}.  Fix $A,\eps>0$. We may assume without loss of generality that $m' < m$, so that $n' > n$ by \eqref{falling}.  We may also assume $t$ is sufficiently large depending on $A, \eps$, as the claim is trivial otherwise; from \eqref{mam} this also implies that $m,m',n,n'$ are sufficiently large depending on $A,\eps$.  Henceforth all implied constants are permitted to depend on $A,\eps$.  By \eqref{mam} we have
$$ \log^{2/3+\eps} n \leq \log m$$
while from \eqref{n-form-falling} we have $\log n \asymp \frac{\log t}{m}$.  From this and \eqref{m-bound-falling} we have
\begin{equation}
\label{eq:msize}
m \asymp \frac{\log t}{\log_2 t}
\end{equation}
and then 
$$ \log n \ll \log^{\frac{1}{2/3 + \eps}}_2 t.$$
Similarly for $m',n'$.  From \eqref{malt} we conclude that
\begin{equation}\label{mmp}
 m-m' \ll \log^{\frac{1}{2/3 + \eps}}_2 t \ll \log^{\frac{1}{2/3+\eps}} m.
\end{equation}
In particular $m \asymp m'$ and, combining~\eqref{n-form-falling} with~\eqref{mmp} and~\eqref{eq:msize}, also $n \asymp n' t^{1/m - 1/m'} \asymp n'$. Hence from \eqref{mam} we see that
\begin{equation}\label{nbig}
 n, n' \gg m^{3/2}.
\end{equation}
Also we have
$$\frac{\log t}{m'} = \frac{\log t}{m} + O\left( \frac{\log t \log^{\frac{1}{2/3+\eps}} m}{m^2} \right) = \frac{\log t}{m} + O( m^{-1/2} )$$
(say), hence on exponentiating and using \eqref{tam}, \eqref{nbig}
\begin{equation}\label{stam}
 n' = \exp\left(\frac{\log t}{m'}\right) + O(m) = n + O( m^{-1/2} n ).
\end{equation}

Suppose that we could find a prime $p > m$ obeying the inequalities
\begin{equation}\label{obey}
\max\left(1 - \left\{ \frac{n'-n}{p}\right\}, 1 - \frac{m}{p}\right) < \left\{ \frac{n-m}{p} \right\} < 1; \quad \left\{ \frac{n'-n}{p}\right\} < 1 - \frac{m}{p}.
\end{equation}
These inequalities imply in particular that
\[
\left\{\frac{n-m}{p}\right\} - 1 + \frac{m}{p} \in [0, 1) \text{ and } \left\{\frac{n-m}{p}\right\} + \left\{ \frac{n'-n}{p} \right\} - 1 + \frac{m}{p} \in [0, 1),
\]
so that these quantities respectively equal $\{\frac{n}{p}\}$ and $\{\frac{n'}{p}\}$. Consequently, if~\eqref{obey} hold, then we would have 
\begin{equation}
\label{eq:obeycon1}
\left \{ \frac{n}{p}\right\} = \left\{\frac{n-m}{p}\right\} - 1 + \frac{m}{p} < \frac{m}{p}
\end{equation}
and (since $m' < m$)
\begin{equation}
\label{eq:obeycon2}
\left\{ \frac{n'}{p}\right\} = \left\{\frac{n-m}{p}\right\} + \left\{ \frac{n'-n}{p} \right\} - 1 + \frac{m}{p} \geq \frac{m}{p} \geq \frac{m'}{p}.
\end{equation}
Now~\eqref{eq:obeycon1} implies that $p$ divides $(n)_m$, while~\eqref{eq:obeycon2} implies that $p$ does not divide $(n')_{m'}$.  This contradicts the assumption $(n)_m = t = (n')_{m'}$.  Thus there cannot be any prime $p \geq 2m$ obeying \eqref{obey}.

Let $w_1 \colon \R \to [0,1]$ be a suitable smooth $\Z$-periodic function supported on the region
$$ \{ x \in \R: \{ x\} \in (1 - \log^{-2A} m,1)\}$$
chosen so that $\int_0^1 w_1 \gg \log^{-2A}m$ and $\|w_1\|_{C^3} \ll \log^{6A} m$,
and let $w_2 \colon \R \to [0,1]$ similarly be a smooth $\Z$-periodic function supported on the region
$$ \{ y \in \R: \{y\} \in (\log^{-2A} m, 1/2) \}$$
chosen so that $w_2(y) = 1$ when $\{y\} \in [2\log^{-2A} m, 1/4]$ and $\|w_2\|_{C^3} \ll \log^{6A}m$.  Let $\p$ be a prime drawn uniformly from all the primes in $[2m, 100m]$.  As $\p$ does not obey \eqref{obey}, we have
$$ \E w_1\left(\frac{n-m}{\p}\right) w_2\left(\frac{n'-n}{\p}\right) = 0$$
and hence by Proposition \ref{equid} (and dyadic decomposition)
$$ \int_2^{100} w_1\left( \frac{n-m}{tm}\right) w_2\left(\frac{n'-n}{tm} \right)\ dt \ll \log^{-100 A} m,$$
or on changing variables $t=1/s$
\begin{equation}\label{w1w2} 
\int_{1/100}^{1/2} w_1\left( \frac{n-m}{m}s\right) w_2\left(\frac{n'-n}{m}s \right)\ ds \ll \log^{-100 A} m.
\end{equation}
On the other hand, by \eqref{stam}, \eqref{nbig} we have
\begin{equation} \label{w1large}
\frac{n-m}{m} \gg \frac{n}{m} \gg m^{1/2} \frac{n'-n}{m} + m^{1/2}.
\end{equation}
We perform a Fourier expansion 
$$ w_1(x) = \sum_{\ell \in \Z} c_{\ell} e(\ell x), $$
where by integration by parts the Fourier coefficients obey the bounds
$$ |c_{\ell}| \ll (1 + |\ell|)^{-3} \log^{6A}m. $$
Thus~\eqref{w1w2} can then be rewritten as
\begin{equation}\label{w1w2fourier} 
\sum_{\ell \in \Z} c_{\ell} \int_{1/100}^{1/2} w_2\left(\frac{n'-n}{m}s \right) e\left(\frac{n-m}{m} \ell s\right)\ ds \ll \log^{-100A}m.
\end{equation}
By~\eqref{w1large} and integration by parts, one readily establishes the bound
$$ \int_{1/100}^{1/2} w_2\left(\frac{n'-n}{m}s \right) e\left(\frac{n-m}{m} \ell s\right)\ ds \ll \frac{\log^{6A} m}{|\ell| m^{1/2}} $$
for $\ell \neq 0$. Thus the total contribution to the left-hand side of~\eqref{w1w2fourier} from the terms with $\ell \neq 0$ is negligible, and hence
$$ c_0 \int_{1/100}^{1/2} w_2\left(\frac{n'-n}{m}s \right)\ ds \ll \log^{-100A}m.  $$
Since $c_0 = \int_0^1 w_1 \gg \log^{-2A}m$ and $w_2$ equals $1$ on $[2\log^{-2A}m, 1/4]$, we have
\begin{equation}
\label{eq:fupper} 
f\left( \frac{n'-n}{m} \right) \ll \log^{-98 A} m
\end{equation}
where
$$ f(\theta) \coloneqq \int_{1/100}^{1/2} 1_{2 \log^{-2A} m \leq \{ \theta s\} \leq 1/4} \ ds.$$
However, direct calculation shows that when $\theta \geq 3$, we have
$$ f(\theta) \geq \sum_{\frac{\theta}{16} \leq n \leq \frac{\theta}{2}-\frac{1}{4}} \int_\R 1_{n + 1/100 \leq \theta s \leq n+1/4}\ ds \gg \theta \cdot \theta^{-1} = 1,$$
when $1/2 < \theta < 3$, we have
$$ f(\theta) \geq \int_{\frac{1}{30\theta}}^{\frac{1}{20\theta}}\ ds \asymp 1,$$
and, when $8\log^{-A} m \leq \theta \leq 1/2$, we have
$$ f(\theta) \geq \int_{1/4}^{1/2}\ ds \asymp 1.$$
Hence~\eqref{eq:fupper} can only hold if
$$ \frac{n'-n}{m} \ll \log^{-A} m,$$
giving the claim \eqref{nan}.
\end{proof}

Now we adapt the analysis from Section \ref{analytic-sec}.  We extend the falling factorial $(n)_m$ to real $n \geq m \geq 0$ by the formula
$$ (n)_m \coloneqq \frac{\Gamma(n+1)}{\Gamma(n-m+1)}.$$
From the increasing nature of the digamma function $\psi$ we see that for fixed $m$, $(n)_m$ increases from $\Gamma(m+2)$ when $n$ goes from $m+1$ to infinity. Applying the inverse function theorem, we conclude that for any sufficiently large $t$ there is a unique smooth function $g_t \colon \{ m > 0: \Gamma(m+2) \leq t \} \to \R$ such that for any $m>0$ with $\Gamma(m+2) \leq t$, one has $g_t(m) \geq m$ and
\begin{equation}\label{gtm}
(g_t(m))_m = t.
\end{equation}
Indeed, one could simply set $g_t(m) \coloneqq f_{t/\Gamma(m+1)}(m)$, where $f_t$ is the function studied in Section \ref{analytic-sec}.

We have an analogue of Proposition \ref{derivi}:

\begin{proposition}[Estimates on the first few derivatives]\label{derivi-falling} Let $C > 1$, and let $t,m$ be sufficiently large depending on $C$ with $\Gamma(m+2) \leq t$.  
Then
\begin{equation}\label{fatm-falling}
 g_t(m) \asymp t^{1/m}.
\end{equation}
In the range $m \leq g_t(m)/2$, we have
\begin{equation}\label{one-falling}
-g'_t(m) \asymp g_t(m)  \frac{\log t}{m^2}
\end{equation}
and in the range $m \leq g_t(m) - C \log^2 g_t(m)$, one has
\begin{equation}\label{ten-falling}
0 < g''_t(m) \ll g_t(m) \left(\frac{\log t}{m^2}\right)^2 + C^{-1} \log^{-3} m.
\end{equation}
\end{proposition}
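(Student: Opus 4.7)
My plan is to mimic the proof of Proposition \ref{derivi}, but work directly with the defining identity $(g_t(m))_m = t$ rather than passing through $f_t$. For the asymptotic \eqref{fatm-falling}, the elementary sandwich $(n-m)^m < (n)_m \le n^m$ used earlier in the section rearranges to $t^{1/m} \le g_t(m) < t^{1/m} + m$; the hypothesis $t \ge \Gamma(m+2)$ together with Stirling's formula \eqref{stirling} gives $t^{1/m} \ge ((m+1)!)^{1/m} \gg m$, so the error $m$ is dominated by the main term and $g_t(m) \asymp t^{1/m}$.

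For \eqref{one-falling}, taking logarithms of $(g_t(m))_m = t$ gives $\log\Gamma(n+1) - \log\Gamma(n-m+1) = \log t$ with $n \coloneqq g_t(m)$; implicit differentiation in $m$, writing $u \coloneqq \psi(n+1)$, $v \coloneqq \psi(n-m+1)$, $u' \coloneqq \psi'(n+1)$, $v' \coloneqq \psi'(n-m+1)$, yields $g_t'(m) = -v/(u-v)$. In the range $m \leq n/2$, the asymptotic $v \asymp \log n$ from \eqref{psi-1} combined with $u - v \asymp m/n$ (proved by the same argument as \eqref{psini}) gives $-g_t'(m) \asymp n\log n / m$; rewriting $\log n \asymp \log t/m$ via \eqref{fatm-falling} yields \eqref{one-falling}. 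Differentiating the relation $g_t'(m)(u-v) = -v$ once more and substituting the formula for $g_t'(m)$ produces
$$g_t''(m) \,=\, \frac{u^2 v' - v^2 u'}{(u-v)^3} \,=\, \frac{u'v'\bigl(F(n+1) - F(n-m+1)\bigr)}{(u-v)^3},$$
where $F(x) \coloneqq \psi(x)^2/\psi'(x)$. A direct computation using \eqref{psi-1}--\eqref{psi-3} shows $F'(x) = \log^2 x + 2\log x + O(\log^2 x/x)$, strictly positive for $x$ sufficiently large; taking $C$ large enough, the hypothesis $m \le n - C\log^2 n$ forces $F$ to be increasing on $[n-m+1, n+1]$, establishing $g_t''(m) > 0$.

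The upper bound in \eqref{ten-falling} is the main obstacle. Using $u'v' \asymp 1/(n(n-m))$, the crude bound $F(n+1) - F(n-m+1) \ll m \log^2 n$ (integrating $F'(x) \ll \log^2 x$ over an interval of length $m$), and $u - v \asymp \log(n/(n-m))$ (which also equals $\asymp m/n$ when $m \le n/2$), the formula above gives
$$g_t''(m) \,\ll\, \frac{m \log^2 n}{n(n-m)\,\bigl(\log(n/(n-m))\bigr)^3}.$$
When $m \le n/2$ this reduces to $\asymp n\log^2 n/m^2 \asymp g_t(m)(\log t/m^2)^2$, matching the first term in \eqref{ten-falling}. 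When $n/2 < m \le n - C\log^2 n$, writing $k \coloneqq n - m$ and $s \coloneqq n/k$ (so $k \ge C \log^2 n$ translates to $s \le n/(C\log^2 n)$), the bound becomes $g_t''(m) \ll s/(n \log^3 s)$. A routine calculus check shows $s/\log^3 s$ is increasing for $s \ge e^3$, so its maximum on the relevant interval occurs at the endpoint $s = n/(C\log^2 n)$, where it is $\asymp n/(C\log^5 n)$, giving $g_t''(m) \ll C^{-1}\log^{-3} n$ as required. The delicate point is the calibration of the threshold $C \log^2 n$: it is chosen precisely so that the $1/\log^3(n/k)$ factor in the denominator just compensates for the loss from the $1/k$ factor when $k$ becomes small.
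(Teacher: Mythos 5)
Your proposal is correct, and for \eqref{fatm-falling} and \eqref{one-falling} it is essentially the paper's argument (the paper simply quotes \eqref{n-form-falling} for the first claim and uses the same implicit differentiation and the formula $g_t'(m)=-\psi(n-m+1)/(\psi(n+1)-\psi(n-m+1))$ for the second). Where you genuinely diverge is the second-derivative bound: both you and the paper start from the identity \eqref{eq:g''tform}, i.e.\ $g_t''(m)(u-v)^3=u^2v'-v^2u'$ with $u=\psi(n+1)$, $v=\psi(n-m+1)$, but the paper writes the numerator as $u^2(v'-u')+(u^2-v^2)u'$, checks that both pieces are positive, and runs two separate regimes ($m\le n/2$ and $n/2\le m\le n-C\log^2 n$) obtaining two-sided asymptotics, finishing with the elementary inequality $(n-m)\log^3\tfrac{n}{n-m}\gg C\log^5 n$; you instead factor the numerator as $u'v'\bigl(F(n+1)-F(n-m+1)\bigr)$ with $F=\psi^2/\psi'$, get positivity from $F'>0$ and the upper bound from $F'\ll\log^2 x$, which yields the unified bound $g_t''(m)\ll m\log^2 n/\bigl(n(n-m)\log^3\tfrac{n}{n-m}\bigr)$ and then the same endpoint analysis. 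Your route is a bit slicker and suffices because only an upper bound (plus positivity) is claimed, while the paper's decomposition also delivers matching lower bounds in each regime. Two small blemishes, neither fatal: in the regime $n/2<m$ your inline simplification drops a $\log^2 n$ factor (it should read $g_t''(m)\ll s\log^2 n/(n\log^3 s)$ with $s=n/(n-m)$); once restored, the endpoint value of $s/\log^3 s\asymp n/(C\log^5 n)$ gives exactly your stated conclusion $g_t''(m)\ll C^{-1}\log^{-3}n$, so the final bound is right, but as literally written the two displayed claims are inconsistent. Also, $s/\log^3 s$ is only increasing for $s\ge e^3$, so strictly speaking you should note that for $s\in[2,e^3)$ the bound is trivially $O(\log^2 n/n)$, which is far below $C^{-1}\log^{-3}n$. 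Finally, the elementary sandwich $(n-m)^m<(n)_m\le n^m$ for real arguments should be replaced by its Gamma-function version (e.g.\ via the mean value theorem for $\log\Gamma$, giving $(n)_m\le (n+1)^m$), but this is the same harmless gloss the paper makes in citing \eqref{n-form-falling}.
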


\begin{proof}  Write $n=g_t(m) \geq m$. First note that~\eqref{fatm-falling} is simply~\eqref{n-form-falling}.
Taking logarithms in \eqref{gtm} we have
\begin{equation}\label{logf-falling}
 \log \Gamma(g_t(m)+1) - \log \Gamma(g_t(m)-m+1) = \log t.
\end{equation}

If we differentiate \eqref{logf-falling} we obtain
\begin{equation}\label{deriv-falling}
 g'_t(m) \psi( g_t(m)+1) - (g'_t(m)-1) \psi(g_t(m)-m+1) = 0.
\end{equation}
In particular we obtain the first derivative formula
\begin{equation}\label{form-falling}
g'_t(m) = \frac{- \psi(n-m+1)}{\psi(n+1) - \psi(n - m + 1)}.
\end{equation}
In the regime $m \leq n/2$ we can then obtain \eqref{one-falling} from \eqref{psini}, \eqref{psi-1}, \eqref{fatm-falling}. 

Differentiating \eqref{deriv-falling} again, we conclude
$$
 g''_t(m) \psi( n+1) + (g'_t(m))^2 \psi'( n+1) - g''_t(m) \psi(n-m+1) 
-(g'_t(m)-1)^2 \psi'(n-m+1) = 0
$$
which we can rearrange using \eqref{form-falling} as
\begin{equation}
\label{eq:g''tform}
\begin{split}
g''_t(m) (\psi(n+1) - \psi(n-m+1))^3 &=  \psi(n+1)^2 \psi'(n-m+1)\\
&\quad - \psi(n-m+1)^2 \psi'(n+1).
\end{split}
\end{equation}
Suppose first that $m \leq n/2$.  Then \eqref{psini} applies, and it suffices to show that
$$ \psi(n+1)^2 \psi'(n-m+1) - \psi(n-m+1)^2 \psi'(n+1) \asymp \left( \frac{m}{n} \right)^3 n \left(\frac{\log t}{m^2}\right)^2.$$
By \eqref{fatm-falling} the right-hand side is $\asymp \frac{m \log^2 n}{n^2}$.  On the other hand, from the mean value theorem and \eqref{psi-1}, \eqref{psi-2}, \eqref{psi-3} we have
$$ 0 < \psi(n+1)^2 (\psi'(n-m+1) - \psi'(n+1)) \asymp \frac{m \log^2 n}{n^2}$$
and
$$ 0 < (\psi(n+1)^2 - \psi(n-m+1)^2) \psi'(n+1) \ll \frac{m \log n}{n^2}$$
giving the claim.

Now suppose that $n/2 \leq m \leq n - C \log^2 n$.  From \eqref{psi-1}, \eqref{psi-2} we have
\begin{align*}
\psi(n+1) - \psi(n-m+1) &\asymp \log \frac{n}{n-m} \\
\psi(n+1)^2 (\psi'(n-m+1) - \psi'(n+1)) 
&\asymp \frac{\log^2 n}{n-m} \\
0 < (\psi(n+1)^2 - \psi(n-m+1)^2) \psi'(n+1) &\ll \frac{\log^2 n}{n} 
\end{align*}
and hence by~\eqref{eq:g''tform}
$$ g_t''(m) \asymp \frac{\log^2 n}{(n-m) \log^3 \frac{n}{n-m}}.$$
Since $\frac{n}{2} \geq n-m \geq C \log^2 n$, we have
$$ (n-m) \log^3 \frac{n}{n-m} \gg C \log^5 n$$
(as can be seen by checking the cases $n-m \leq \sqrt{n}$ and $n-m > \sqrt{n}$ separately), and the claim follows.
\end{proof}

Now we can establish Theorem \ref{main-falling}.  Let $C>0$ be a large absolute constant, let $\eps>0$, and suppose that $t$ is sufficiently large depending on $\eps,C$.  Let $(n,m)$ be the integer solution to \eqref{falling} in the region $\exp( \log^{2/3 + \eps} n ) \leq m \leq n- 1$ with a maximal value of $m$; we may assume that such a solution exists, since we are done otherwise.  If $(n',m')$ is any other solution in this region, then $m'<m$ and $n < n'$.  Note that $n,n',m,m'$ are sufficiently large depending on $\eps,C$.  From Proposition \ref{distance-falling} and \eqref{n-form-falling} we have
$$ m - m' \ll \log n' \ll \frac{\log t}{m'}$$
and from~\eqref{n-form-falling} and~\eqref{m-bound-falling}
$$m' \asymp \frac{\log t}{\log n'} \geq \frac{\log t}{\log^{\frac{1}{2/3+\eps}} m'} \gg \frac{\log t}{\log^{\frac{1}{2/3+\eps}}_2 t}$$
and thus $m' \asymp m$ and $\frac{\log t}{m} = \frac{\log t}{m'} + O(1)$. Hence $n \asymp n'$  and
\begin{equation}\label{mamn}
 m - m' \ll \log n.
\end{equation}

First suppose that $m \leq n^{1/2} \log^{10} n$.  Here we will exploit the fact that $n$ grows rapidly as $m$ decreases.  From Proposition \ref{distance-falling} we have
$$ n' - n \ll_{\eps} \frac{m}{\log^{200} m} \ll \frac{m}{\log^{100} n}.$$
On the other hand, from \eqref{one-falling} and the mean value theorem we have
$$ n' - n = g_t(m') - g_t(m) \gg \frac{n \log t}{m^2} (m-m') \geq \frac{n}{m}$$
thanks to \eqref{m-bound-falling} and the trivial bound $m-m' \geq 1$.  Thus we have
$$ \frac{n}{m} \ll \frac{m}{\log^{100} n}$$
but this contradicts the hypothesis $m \leq n^{1/2} \log^{10} n$.  

Now suppose we are in the regime
$$ n^{1/2} \log^{10} n < m \leq n - C \log^2 n.$$
Here we will take advantage of the convexity properties of $g_t$.
From \eqref{mamn}, $m'$ lies in the interval $[m - O(\log n), m]$.  By \eqref{fatm-falling}, for all $x$ in this interval, we have
$$ g_t(x) \asymp t^{1/x} \asymp t^{1/m} \asymp n$$
and by \eqref{ten-falling}, we have
\begin{align*}
 0 < g''_t(x) &\ll g_t(x) \left(\frac{\log t}{x^2}\right)^2 + C^{-1} \log^{-3} x \\
&\ll n \left(\frac{\log t}{m^2}\right)^2 + C^{-1} \log^{-3} m \\
&\ll n \left(\frac{\log n}{m}\right)^2 + C^{-1} \log^{-3} n \\
&\ll C^{-1} \log^{-3} n
\end{align*}
since $m > n^{1/2} \log^{10} n$.  Applying Lemma \ref{integ} with $k = 2$, we see (for $C$ large enough) that there are at most two integers $m'$ in this interval with $g_t(m')$ an integer, giving Theorem \ref{main-falling} follows in this case.

It remains to handle the case
\begin{equation}\label{nmc}
 n - C \log^2 n < m \leq n-1.
\end{equation}
Recall from~\eqref{mamn} that $m'$ lies in the interval $[m - O(\log n), m]$.  From \eqref{n-form-falling}, \eqref{nmc} we have
$$ m \asymp n \asymp \frac{\log t}{\log_2 t}$$
so $m' = m - O(\log_2 t)$.  From \eqref{n-form-falling} again we thus also have
$$ m' \asymp n' \asymp \frac{\log t}{\log_2 t}.$$
From \eqref{falling} we have
$$ \frac{n'}{n'-m'} \frac{n'-1}{n'-1-m'} \dots \frac{n+1}{n+1-m'} = (n-m') \dots (n-m+1).$$
The right-hand side is at most $\exp(O( \log_2 t \log_3 t) )$. This implies that $n' - n \ll \log_3 t$, since otherwise the left hand side would be, for any $C \geq 1$,
\[
\gg \left(\frac{n}{n-m'+1+ C \log_3 t}\right)^{C \log_3 t} \gg \exp\left(\frac{C}{2} \log_3 t \log_2 t\right)
\]
which contradicts the bound for the right hand side when $C$ is sufficiently large.

In particular we have from the triangle inequality that
$$ n-m, n' - m' \ll C \log^2_2 t.$$
Making the change of variables $\ell := n-m$, it now suffices to show that there are at most two integer solutions to the equation
\begin{equation}\label{l-eq}
(n)_{n-\ell} = t
\end{equation}
in the regime $1 \leq \ell \ll C \log^2_2 t$.  We write this equation \eqref{l-eq} as
$$ n! = t \ell!$$
or equivalently
$$ n = h_t(\ell)$$
where $h_t(x) \coloneqq \Gamma^{-1}( t \Gamma(x+1 ) ) - 1 $, and $\Gamma^{-1} \colon [1,+\infty) \to [2,+\infty)$ is the inverse of the gamma function.  Here we will exploit the very slowly varying nature of $h_t$.  From Stirling's formula we have
$$ h_t(x) \asymp \frac{\log t}{\log_2 t}$$
whenever $1 \leq x \ll C \log^2_2 t$.
Taking the logarithmic derivative of the equation
$$ \Gamma(h_t(x) + 1) = t \Gamma(x+1)$$
we have
$$ h'_t(x) \psi(h_t(x)+1) = \psi(x+1).$$
Hence by \eqref{psi-1}
$$ h'_t(x) \asymp \frac{\log x}{\log h_t(x)} \ll \frac{\log_3 t}{\log_2 t}$$
in the regime $1 \leq x \ll C \log^2_2 t$.  In particular, for two solutions $(n,\ell), (n',\ell')$ to \eqref{l-eq} in this regime we have
\begin{equation}\label{nnp}
 n-n' \ll \frac{\log_3 t}{\log_2 t} |\ell-\ell'|.
\end{equation}
For fixed $n$ there is at most one $\ell \geq 1$ solving \eqref{l-eq}.  We conclude that for two distinct solutions $(n,\ell), (n',\ell')$ to \eqref{l-eq} in this regime, we have $|n-n'| \geq 1$, and hence the separation
$$ |\ell-\ell'| \gg \frac{\log_2 t}{\log_3 t}.$$

Now suppose we have three solutions $(n_1,\ell_1), (n_2,\ell_2), (n_3,\ell_3)$ to \eqref{l-eq} in this regime.  We can order $\ell_1 < \ell_2 < \ell_3$, so that $n_1 < n_2 < n_3$.  From the preceding discussion we have
$$ \frac{\log_2 t}{\log_3 t} \ll \ell_2 - \ell_1, \ell_3 - \ell_2 \ll C \log^2_2 t$$
and
$$ 1 \leq n_2-n_1, n_3 - n_2 \ll C \log_2 t \log_3 t.$$
If $2^j$ is a power of $2$ that divides an integer in $(n_1,n_2]$ as well as an integer in $(n_2,n_3]$, then we must therefore have $2^j \ll C \log_2 t \log_3 t$, so that $j \ll \log_3 t$.  Thus, there must exist $i=1,2$ such that the interval $(n_i,n_{i+1}]$ only contains multiples of $2^j$ when $j \ll \log_3 t$.  Fix this $i$.  Taking $2$-adic valuations of \eqref{l-eq} using \eqref{legendre} we have
$$ \sum_{j=1}^{\infty} \left\lfloor \frac{n_i}{2^j} \right \rfloor = v_2(t) + \sum_{j=1}^\infty \left\lfloor \frac{\ell_i}{2^j} \right \rfloor $$
and
$$ \sum_{j=1}^{\infty} \left\lfloor \frac{n_{i+1}}{2^j} \right \rfloor = v_2(t) + \sum_{j=1}^\infty \left\lfloor \frac{\ell_{i+1}}{2^j} \right \rfloor $$
and thus
\begin{equation}
\label{eq:2adiccomp}
\sum_{j=1}^{\infty} \left(\left\lfloor \frac{n_{i+1}}{2^j} \right \rfloor - \left\lfloor \frac{n_{i}}{2^j} \right \rfloor\right)  = \sum_{j=1}^\infty \left(\left\lfloor \frac{\ell_{i+1}}{2^j} \right \rfloor - \left\lfloor \frac{\ell_{i}}{2^j} \right \rfloor\right).
\end{equation}
Since 
\begin{equation}\label{lii}
\ell_{i+1} - \ell_i \gg \frac{\log_2 t}{\log_3 t},
\end{equation}
we certainly have $\ell_{i+1} - \ell_i \geq 2$, and the right-hand side of~\eqref{eq:2adiccomp} is at least
$$ \left\lfloor \frac{\ell_{i+1}}{2} \right \rfloor - \left\lfloor \frac{\ell_{i}}{2} \right \rfloor \gg \ell_{i+1} - \ell_i.$$
By construction, the terms on the left-hand side of~\eqref{eq:2adiccomp} vanish unless $j \ll \log_3 t$, in which case they are equal to $\frac{n_{i+1}-n_i}{2^j} + O( 1)$.  Thus the left-hand side of~\eqref{eq:2adiccomp} is at most $O( n_{i+1} - n_i + \log_3 t )$.  Thus
$$
\ell_{i+1} - \ell_i \ll n_{i+1} - n_i + \log_3 t.$$
But from \eqref{nnp} one has $n_{i+1} - n_i  \ll \frac{\log_3 t}{\log_2 t} (\ell_{i+1}-\ell_i)$. Hence $\ell_{i+1} - \ell_i \ll \log_3 t$.  But this contradicts \eqref{lii}.  This concludes the proof of Theorem \ref{main-falling}. 
\bibliography{refs}
\bibliographystyle{plain}

\end{document}